\newtheorem{thm}{Theorem}
\newtheorem{lem}{Lemma}
\newtheorem{prop}{Proposition}
\newtheorem{defi}{Definition}
\theoremstyle{definition}						%Remarques et notations en non italique
\newcommand{\R}{\mathbb{R}}
\newcommand{\N}{\mathbb{N}}
\newcommand{\Z}{\mathbb{Z}}
\newcommand{\Q}{\mathbb{Q}}
\newcommand{\Td}{\mathbb{T}^d}
\newcommand{\F}{\mathfrak{F}}
\newcommand{\V}{\mathcal{V}}
\newcommand{\e}{\epsilon}
\newcommand{\dist}{\textrm{dist}}
\newcommand{\ex}{\textrm{e}}
\newcommand{\Sph}{\mathbb{S}}
\newcommand{\bv}{\bm{v}}
\newcommand{\y}{\bm{y}}
\newcommand{\x}{\bm{x}}
\newcommand{\z}{\bm{z}}
\newcommand{\q}{\bm{q}}
\newcommand{\m}{\bm{m}}
\newcommand{\und}{\mathds{1}_\textrm{d}}
\begin{document}

%\large

\title{How far can you see in a forest?}
\author{Faustin ADICEAM}
\address{ Department of Mathematics, University of York,  York, YO10
5DD, UK } \email{faustin.adiceam@york.ac.uk}
%\date{\today}
\thanks{The author's research was partly supported by EPSRC Programme Grant EP/J018260/1. He would like to thank Professor Barak Weiss for suggesting the problem and the referees for valuable comments which improved an earlier draft of the paper.}

\begin{abstract}
We address a visibility problem posed by Solomon \& Weiss~\cite{solweiss}. More precisely,  in any dimension $n:=d+1\ge 2$, we construct a forest $\F$ with finite density satisfying the following condition~: if $\e>0$ denotes the radius common to all the trees in $\F$, then the visibility $\V$ therein satisfies the estimate $\V(\e)\, =\, O_{\eta, d}\left(\e^{-2d-\eta}\right)$ for any $\eta>0$, no matter where we stand and what direction we look in. The proof involves Fourier analysis and sharp estimates of exponential sums.
\end{abstract}

\maketitle

\tableofcontents

\section{Introduction}

In~\cite[\S 2.3]{bishop}, C.~Bishop sets the following problem~:

\begin{quote}
``Suppose we stand in a forest with tree trunks of radius $\e > 0$ and no two trees centered closer than unit distance apart. Can the trees be arranged so that we can never see further than some distance $V < \infty$, no matter where we stand and what direction we look in? What is the size of $V$ in terms of $\e$?''
\end{quote}

This is an example of a visibility problem, a topic which has attracted substantial interest over the past decades --- see~\cite{survey} for a survey. Among the problems which gave impetus to research in this field, one can mention the Art Gallery Problem (see~\cite{orourke} for details) or, closer to the spirit of the question set by C.~Bishop, P\'olya's orchard problem. In~\cite[Chap.~5, Problem 239]{polyszego}, G.~P\'olya asks ``how thick must [be] the trunks of the trees in a regularly spaced circular orchard grow if they are to block completely the view from the center''. He then provides a solution in the case that the observer stands at the origin in the plane and that the centres of the trees are the elements of $\Z^2\backslash\{\bm{0}\}$ lying in a disk of integer radius $Q\ge 0$. Allen~\cite{allen} extended this result to the case when the disk has a non--integer radius and Kruskal~\cite{kruskal}  dealt with the situation where the trees are centred at non--zero points of any lattice. Chamizo~\cite{chamizo} also studied an analogue of this problem in hyperbolic spaces and Cusick~\cite{cusick} considered the case when the trees have the shape of any given convex body (Cusick relates this case with the Lonely Runner Conjecture --- see~\cite{chen1, chen2} for further developments).  G.~P\'olya~\cite{polyaforestbis} also took an interest in the visibility in a random and periodic forest, a topic related to the distribution of free path lengths in the Lorentz gas which is still an active domain of research --- see~\cite{markstrombis} and the references therein. On another front, problems of visibility appear in the context of quasi--crystals~\cite{marstro} and of probabilistic billiards in relation with the study of the behaviour of particles~\cite{bachkhmapla}. They are also much studied from an algorithmic point of view and one can therefore find a wealth of literature dealing with them in computer science --- see, e.g., \cite{ghosh, orourke} and the references therein.  Lastly, one should mention that Bishop's question finds its origin in a problem of rectifiability of curves.

\paragraph{} Let $d\ge 1$ be a fixed integer. Define formally a forest $\F$ in $\R^{d+1}$ as a collection of points in $\R^{d+1}$. Given $\epsilon>0$, an $\epsilon$--tree in this forest shall refer to a closed ball centred at an element in $\F$.

\begin{defi}
A set $\F\subset \R^{d+1}$ is a {\em dense forest} if there exists a function $\V~: \epsilon>0 \mapsto \V (\e)\ge 0$ defined in a neighbourhood of the origin such that the following holds for all $\e >0$ small enough~: 
\begin{equation}\label{defdenseforest}
\forall \bm{x}\in\R^{d+1}, \;\;\; \forall \bm{v}\in\Sph^d_2, \;\;\; \exists t\in [0, \V(\e)], \;\;\; \exists \bm{f}\in\F, \;\;\; \left\|\bm{x}+t\bm{v}-\bm{f}\right\|_2\le \e,
\end{equation}
where $\|\,.\,\|_2$ stands for the Euclidean norm in $\R^{d+1}$ and $\Sph^{d}_2$ for the Euclidean sphere in dimension $d+1$. The function $\V$ is then referred to as a {\em visibility function} for $\F$.
\end{defi}

Thus, in a forest with visibility function $\V$, given $\e>0$, {\em any} line segment of length $\V(\e)$ intersect an $\e$--tree (this is the main difference with a P\'olya's orchard--type problem, where one only takes into account those line segments with one of the end points at the origin). It is clear that $\F\subset\R^{d+1}$ is a dense forest whenever the set $\F$ is itself dense. To avoid this pathological case, one may consider at least two types of restrictions for the set $\F$.

On the one hand, one may ask for there to exist a strictly positive real number $r$ such that the gap between any two elements in $\F$ is at least $r$. The forest $\F$ is then said to be {\em uniformly discrete}. This is essentially the condition required by C.~Bishop in the statement of his problem. Y.~Solomon and B.~Weiss~\cite{solweiss} proved the existence of a uniformly discrete dense forest in any dimension. However, their construction is not fully explicit as the forest they obtain is defined as a set of ``visit times'' for the action of a group on a suitable compact metric space. Furthermore, no bound is given for the corresponding visibility function.

On the other hand, one may consider a concept weaker than uniform discreteness, namely that of {\em finite density}. More precisely, a forest $\F$ is said to be of finite density if $$\limsup_{T\rightarrow +\infty}\;\frac{\# \left(\F\cap B_2(\bm{0}, T)\right)}{T^{d+1}}\,<\, +\infty,$$ where $\# X$ stands for the cardinality of a subset $X\subset\R^{d+1}$ and where, given $\bm{x}\in \R^{d+1}$, $B_2(\bm{x}, T)$ denotes the Euclidean ball with radius $T$ centred at $\bm{x}$ (\footnote{We mention in passing that a condition even weaker than $\F$ being of finite density is that of $\F$ being discrete.}). Clearly, a uniformly discrete dense forest has a finite density. Using Dirichlet's Theorem in Diophantine approximation, Y.~Peres~\cite[\S 2.3]{bishop} proved the existence of a dense forest with finite density in the plane (i.e., when $d=1$) whose visibility function $\V$ satisfies the estimate $\V(\e) = O\left(\e^{-4} \right)$ . It is a question left open at end of~\cite{solweiss} to determine whether this bound can be improved. The following, which is the main result of this paper, shows in particular that it is indeed the case.

\begin{thm}\label{mainthmintro}
Let $d\ge 1$. Then there exists a dense forest $\F\subset\R^{d+1}$ with finite density whose visibility function $\V$ satisfies the estimate 
\begin{equation}\label{estimvisimainthm}
\V(\e) \,=\, O_{\eta, d}\left(\e^{-2d-\eta}\right)
\end{equation}
for any $\eta>0$.
\end{thm}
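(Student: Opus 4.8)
The plan is to build the forest $\F$ from a well-chosen arithmetic progression of parallel hyperplane pieces, or more precisely from translates of a single periodic pattern, and then reduce the visibility estimate to a counting problem for exponential sums. First I would reduce to the case where $\x$ lies in a fundamental domain and $\bv$ is irrational, since the rational directions form a measure-zero set that can be treated separately (or absorbed into the $\eta$ loss). The natural construction is to take $\F$ to be a union over $k\ge 1$ of rescaled copies of $\Z^{d}$ embedded in the hyperplane $\{x_{d+1}=k\}$ (or something adapted to a sequence of moduli $q_k$), so that a line segment of direction $\bv$ intersects an $\e$-tree precisely when, for some scale $k\lesssim \V(\e)$, the point where the segment crosses the corresponding hyperplane is within $\e$ of a lattice point after rescaling. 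This is a simultaneous Diophantine approximation condition on the slopes $(v_1/v_{d+1},\dots,v_d/v_{d+1})$, and the key identity is that the indicator of "the segment misses all trees up to height $T$" can be bounded by a Fourier series whose coefficients involve the $q_k$.

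The heart of the argument is then to choose the sequence of scales/moduli $q_k$ so that two competing requirements are balanced: the density of $\F$ stays finite (which forces the $q_k$ to grow, so that copies at large height are sparse), while the visibility $\V(\e)$ is forced to be $O_{\eta,d}(\e^{-2d-\eta})$ (which forces enough copies at the right scales to catch every direction). Concretely I expect the choice $q_k \asymp k^{\theta}$ for a suitable exponent $\theta$, or a geometric/lacunary sequence, followed by a dyadic decomposition over the height; within each dyadic block one estimates the number of bad directions via the Erdős–Turán inequality or directly via bounds on the incomplete exponential sums $\sum_{\m}\ex(\langle \m, \bv t\rangle)$ truncated to $|\m|\lesssim q_k$. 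The $\e^{-2d}$ exponent should emerge as follows: covering $\Sph^d_2$ to precision comparable to $\e/\V$ requires $\sim (\V/\e)^d$ caps, and ensuring each cap is "hit" at some height $\le \V$ by a lattice of spacing $q_k$ costs another factor of order $(\text{something})^d$, and optimizing gives $\V\asymp \e^{-2}$ up to the $\e^{-\eta}$ slack coming from the number-theoretic input (divisor bounds / the $k^{\eta}$ losses in exponential sum estimates).

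The main obstacle, I expect, is the uniformity in $\x$: a P\'olya-orchard-type argument only controls segments emanating from the origin, whereas here the base point is arbitrary, so the exponential sums acquire an additive phase $\ex(\langle \m, \x\rangle)$ which one cannot average away. Handling this requires either a genuinely two-parameter large-sieve/Erdős–Turán estimate (uniform over the shift) or a clever reindexing so that the shift is absorbed into the lattice translation. A secondary difficulty is making the estimates uniform over all small $\e$ simultaneously rather than for a sequence $\e_j\to 0$; this is where a careful dyadic-in-$\e$ argument together with monotonicity of $\V$ is needed. Once these uniformity issues are dealt with, the density bound is a routine geometric-series computation and the visibility bound follows by collecting the dyadic contributions.

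Finally, I would verify the construction is genuinely of finite (not merely discrete) density by summing $\#(\F\cap B_2(\bm 0,T))$ over the $O(T)$ relevant scales $k$, each contributing $O((T/q_k)^{d})$ points restricted to a ball of radius $T$ in the hyperplane, and checking that $\sum_{k\le T} (T/q_k)^d = O(T^{d+1})$ for the chosen $q_k$; this pins down the admissible growth rate of $q_k$ from below and, combined with the visibility requirement pinning it from above, shows the two constraints are compatible exactly in the regime that yields the exponent $2d+\eta$.
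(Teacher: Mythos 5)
Your proposal correctly anticipates the Fourier-analytic framework (approximating the indicator of the tree-hitting condition by a trigonometric polynomial and reducing to exponential sum estimates) and the hyperplane-slice shape of the construction. However, there are three genuine gaps.

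First, your construction is not the right one. You propose slabs $\{x_{d+1}=k\}$ each carrying a \emph{rescaled} lattice of spacing $q_k$. With $q_k$ bounded below (needed for finite density) this fails: a base point with $\z \approx \tfrac{1}{2}\und$ stays bounded away from $q_k\Z^d$ for every integer $q_k$, since rescaling keeps $\bm{0}$ fixed, so no choice of moduli alone can sweep out the torus. The paper instead keeps the lattice $\Z^d$ fixed and \emph{translates} it in slab $k$ by a carefully chosen offset $a_{|k|}\und$. The sequence $(a_k)$ is the real design variable: the paper first takes $a_k=\theta\cdot k!$ for an unspecified $\theta$ obtained by Borel--Cantelli from a moment estimate (using the lacunarity of $k!$ to count solutions of $\sum\alpha_{l_i}=\sum\alpha_{n_i}$), and separately gives the explicit choice $a_k=k(\log k-1)$ amenable to Van der Corput's method. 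You never isolate the translation sequence or the mechanism that forces square-root cancellation in the relevant sum $\sum_{k=V+1}^{2V}\ex\bigl(\m\bm{\cdot}(k\q - a_k\und)\bigr)$.

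Second, you misidentify the ``main obstacle.'' Uniformity in the base point $\x$ (equivalently $\z$) is not a two-parameter large-sieve problem: once one takes absolute values of the exponential sum coming from the minorant, the phase $\ex(\m\bm{\cdot}\z)$ is unimodular and simply factors out, leaving a sum depending only on $\q$, $\m$ and $(a_k)$. The actual difficulty is uniformity in the \emph{direction} $\q$ and the frequency $\m$; the paper handles this by reducing to the finite set $D_1^{(d)}(V)$ of rationals with denominator at most $V^2$ (Lemma~\ref{lemreduc}) and then proving the estimate uniformly over that set, either metrically in $\theta$ or via Van der Corput. Note also that the reduction to rational directions is used positively, not treated as a measure-zero exceptional set to be absorbed into $\eta$ as you suggest.

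Third, your construction only controls segments that cross the hyperplanes $\{x_{d+1}=k\}$ transversally; segments (nearly) parallel to those hyperplanes are never addressed. The paper fixes this by taking the forest to be the union of $d+1$ rotated copies $\F_j$ of a single $\F_1$, one per coordinate direction, so that every direction $\bv\in\Sph_\infty^d$ is transversal to the slabs of some $\F_j$. This step is also what makes the resulting forest a finite union of uniformly discrete sets, hence of finite density, without any computation of the form $\sum_{k\le T}(T/q_k)^d$.
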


Here, $O_{\eta, d}\left(\: .\:\right)$ means that the implicit constant implied by this notation depends at most on $\eta$ and $d$.

The construction of the dense forest in Theorem~\ref{mainthmintro} is inspired by the work of Y.~Peres and has the merit to be explicit. It will actually be defined as the union of $d+1$ uniformly discrete forests\footnote{As pointed out by a referee, an intermediate condition weaker than uniform discreteness but stronger than finite density is that of finite upper uniform density. The upper uniform density $D^+\left(\F\right)$ of a discrete set $\F\subset\R^{d+1}$ is defined by $$D^+\left(\F\right)\,:=\, \limsup_{T\rightarrow\infty}\:\sup_{\bm{x}\in\R^{d+1}}\:\frac{\# \left(\F\cap B_2\left(\bm{x}, T\right)\right)}{T^{d+1}}\cdotp$$ The condition $D^+\left(\F\right)<\infty$ is then equivalent to $\F$ being the finite union of uniformly discrete sets.}. As a consequence, the argument in~\cite[\S 2.3]{bishop} can easily be modified to show that, given $\e>0$, there exists a uniformly discrete set $\F_\e$ depending on $\e$ satisfying equation~\eqref{estimvisimainthm} for a particular function $\V$.

It should be noted that the visibility problem under consideration is closely related to the Danzer problem, which asks for the existence in $\R^{d+1}$ of a set of finite density intersecting any convex body of volume one. Clearly, such a set is in particular a dense forest with visibility function $\V(\e)=\e^{-d}$. As the Danzer problem is still open, one may consider to weaken the conditions therein in two directions. On the one hand, one may relax the density constraint. Thus, Y.~Solomon and B.~Weiss~\cite{solweiss} showed that there exists a set $\mathcal{D}$ intersecting any convex body with volume one such that $\#\left(\mathcal{D}\cap B_2(\bm{0}, T)\right)\,=\, O\left(T^{d+1}\log T \right)$. On the other hand, one may require less restrictive conditions for the convex bodies under consideration. While John's Theorem (\cite{john}, see also~\cite[Proposition~5.4]{solweiss}) shows that one obtains essentially an equivalent  problem when limiting oneself to the subset of boxes in $\R^{d+1}$, one may relax the volume constraint for these boxes\footnote{A box in $\R^{d+1}$ is the image under an orthogonal transformation of an aligned hyperrectangle of the form $\bm{x}+\prod_{i=1}^{d+1}[a_i, b_i]$, where $a_i< b_i$ ($1\le i\le d+1$) and $\bm{x}\in\R^{d+1}$.}. For obvious reasons, the constructions of dense forests are in this vein. In this respect, the interest of Theorem~\ref{mainthmintro} lies in the fact that it provides the best known bound for the visibility function of a dense forest. This bound is essentially the best one can hope for with the Fourier analytic proof presented in sections~\ref{sec3} and~\ref{sec4} (see also the introduction of section~\ref{sec4} for further details). The construction of the dense forest in section~\ref{sec1} has an independent interest.

\paragraph{\textbf{Notation.}} In addition to those already introduced, the following pieces of
notation shall be used throughout : for any $x\in\R$, let $$\ex (x)\,:=\, \exp\left(2i\pi x\right) \qquad \textrm{ and } \qquad \und\,:=\,\left(1, \, \dots \, , 1\right)\in\R^d.$$ Let $\hat{f}$ denote the Fourier transform of an integrable function $f$ defined in $\R^{d}$. For any $\bm{q}:=\bm{a}/b\in\Q^d$, where $\bm{a}\in\Z^d$, $b\in\N$ and $\gcd(\bm{a}, b)=1$, the denominator $\textrm{den}(\bm{q})$ of $\bm{q}$ shall refer to the well--defined integer $b$. Given $p\in [1; +\infty]$, $\|\, . \, \|_p$ shall stand for the usual $p$--norm in $\R^{d}$. For any two subsets $A, B \subset \R^{d}$, set $$\dist_p(A,B)\, :=\, \inf \left\{\|\bm{a}-\bm{b} \|_p \: :\: \bm{a}\in A, \bm{b}\in B\right\}$$ and let $B_p(\bm{x}, r)$ (resp.~$\Sph^{d-1}_p$) denote the ball with radius $r\ge 0$ centred at $\bm{x}\in\R^d$ (resp.~the unit sphere in $\R^d$) defined with respect to the $p$--norm. Notation such as $\ll_{\alpha, \beta}$, $\gg_{\alpha, \beta}$ and $\asymp_{\alpha, \beta}$ shall mean that the implicit constants implied by these relations depend at most on the parameters $\alpha$ and $\beta$. Also, $\left|Y\right|$ shall refer to the Lebesgue measure of a measurable set $Y\subset\R$ and $\bm{x\cdot y}$ to the usual dot product between two vectors $\bm{x}, \bm{y}\in\R^d$. Given $x,y\in\R$ with $x\le y$, let $$\llbracket x, y\rrbracket\, :=\, \left\{n\in\Z\: :\: x\le n\le y \right\}$$ denote the interval of integers with endpoints $x$ and $y$. Lastly, $\Td$ shall stand for the $d$--dimensional torus $\R^d/\Z^d$ and $\left\|\, . \,\right\|_\infty^{\Td}$ for the corresponding sup--norm; that is, for any $\bm{x}\in\R^d$, $$\left\|\bm{x}\right\|_\infty^{\Td}\, :=\, \inf\left\{\left\|\bm{x}-\bm{m} \right\|_\infty\: : \: \bm{m}\in\Z^d \right\}.$$

\section{Construction of the forest}\label{sec1}

In view of the equivalence of norms in finite dimension and in view of the nature of the result to prove, it should be clear that one may choose to work with any norm instead of the usual Euclidean one in definition~\eqref{defdenseforest}. This choice will indeed only affect the value of the implicit multiplicative constant in~\eqref{estimvisimainthm}. It will be more convenient for us to work with the sup--norm. Thus, definition~\eqref{defdenseforest} shall be modified in the following way~: $$\forall \bm{x}\in\R^{d+1}, \;\;\; \forall \bm{v}\in\Sph_\infty^d, \;\;\; \exists t\in [0, \V(\e)], \;\;\; \exists \bm{f}\in\F, \;\;\; \left\|\bm{x}+t\bm{v}-\bm{f}\right\|_\infty\le \e$$ for any given $\epsilon>0$ small enough. Note that $\Sph_\infty^d$ is the boundary of the hypercube $[-1, 1]^{d+1}$; that is, $$\Sph_\infty^d\, :=\, \left\{\bm{x}:=(x_1, \dots, x_{d+1})\in [-1, 1]^{d+1}\: : \: \exists i_0\in \llbracket 1, d+1\rrbracket, \, \left|x_{i_0}\right|=1 \right\}.$$

Given $j\in \llbracket 1, d+1\rrbracket$, define $$\mathcal{S}_\infty^d(j)\,:=\, \left\{\bm{v}\in\Sph_\infty^d\: :\: \left|v_j\right|=1 \right\}.$$ Fix $\e>0$ and  $V>0$. For $\bm{y}\in\R^{d+1}$ and $\bm{v}\in\mathcal{S}_\infty^d(1)$, let $$I_V(\bm{v}, \bm{y})\, :=\, \left\{\bm{y}+t\bm{v}\: : \: 0\le t \le V\right\}$$ be a line segment.

Assume first that there exists a forest $\F_1\subset \R^{d+1}$ such that for any $\bm{v}\in \mathcal{S}_\infty^d(1)$ and any $\bm{y}\in\R^{d+1}$, 
\begin{equation}\label{distf1}
\dist_\infty\left(\F_1, I_V(\bm{v}, \bm{y}) \right)\le \e.
\end{equation} 
This means that any line segment of Euclidean length at most a constant multiple of $V$ intersects an $\e$--tree in $\F_1$ provided it be supported on a line spanned by a vector $\bv\in\mathcal{S}_\infty^d(1)$. See Figure~\ref{foretasyfigure1} below for an illustration in the plane of the type of forests $\F_1$ that shall be considered later.

\begin{center}
\begin{minipage}{0.4\textwidth}
\makebox[\textwidth][c]{\includegraphics[width=1\textwidth]{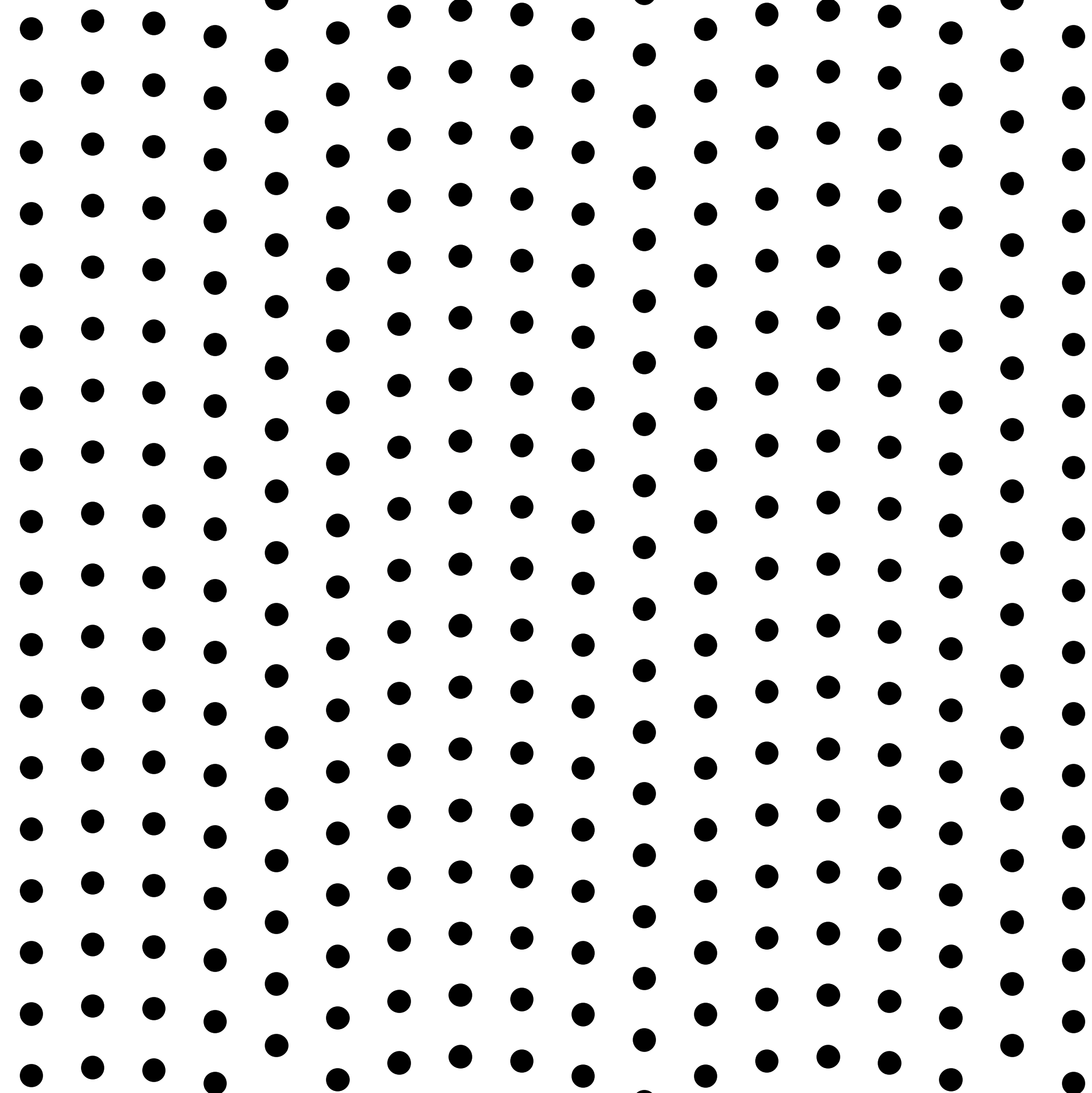}}%
\captionsetup{margin={0pt,0\textwidth}}%
\captionof{figure}{\footnotesize Trees in a forest $\F_1$ which is dense when considering only those line segments supported on a line spanned by a vector $\bv\in\mathcal{S}_\infty^1(1)$.}
\label{foretasyfigure1}
\end{minipage}
\end{center}

Consider now \emph{any} $\bv\in\Sph_\infty^d$. Then, there exists an index $j\in\llbracket 1, d+1\rrbracket$ such that $\bv\in\mathcal{S}_\infty^d(j)$. If $j\neq 1$, let $R_j$ denote the rotation in the $(x_1, x_j)$--plane bringing $x_j$ onto $x_1$ and leaving the orthogonal of this plane invariant. Thus, $R_j(\bv)\in\mathcal{S}_\infty^d(1)$, which implies from~\eqref{distf1} that $$\dist_\infty\left(\F_1\, ,\,  I_V(R_j(\bm{v}), R_j(\bm{y}))\, \right)\le \e$$  for any $\y\in\R^{d+1}$.

In other words, if one denotes by $\F_j$ the forest $\widetilde{R}_j\left(\F_1\right)$ obtained by applying the rotation $\widetilde{R}_j:=R_j^{-1}$ to the forest $\F_1$ ($2\le j \le d+1$), then the visibility $\V(\e)$ in the forest $$\F\, :=\, \bigcup_{j=1}^{d+1}\F_j$$ satisfies the estimate $\V(\e)\le V$ and is in particular finite. If the forest $\F_1$ is furthermore uniformly discrete, then the forest $\F$ has a finite density as the union of a finite number of uniformly discrete forests. 

For the forest $\F_1$ in the plane represented in Figure~\ref{foretasyfigure1}, the resulting forests $\F_2=\widetilde{R}_2\left(\F_1\right)$ and $\F=\F_1\cup\F_2$ are respectively depicted in Figures~\ref{foretasyfigure} and~\ref{foretuniffigure}

\begin{minipage}{0.4\textwidth}
%\begin{figure}[H]
\makebox[\textwidth][c]{\includegraphics[width=1\textwidth]{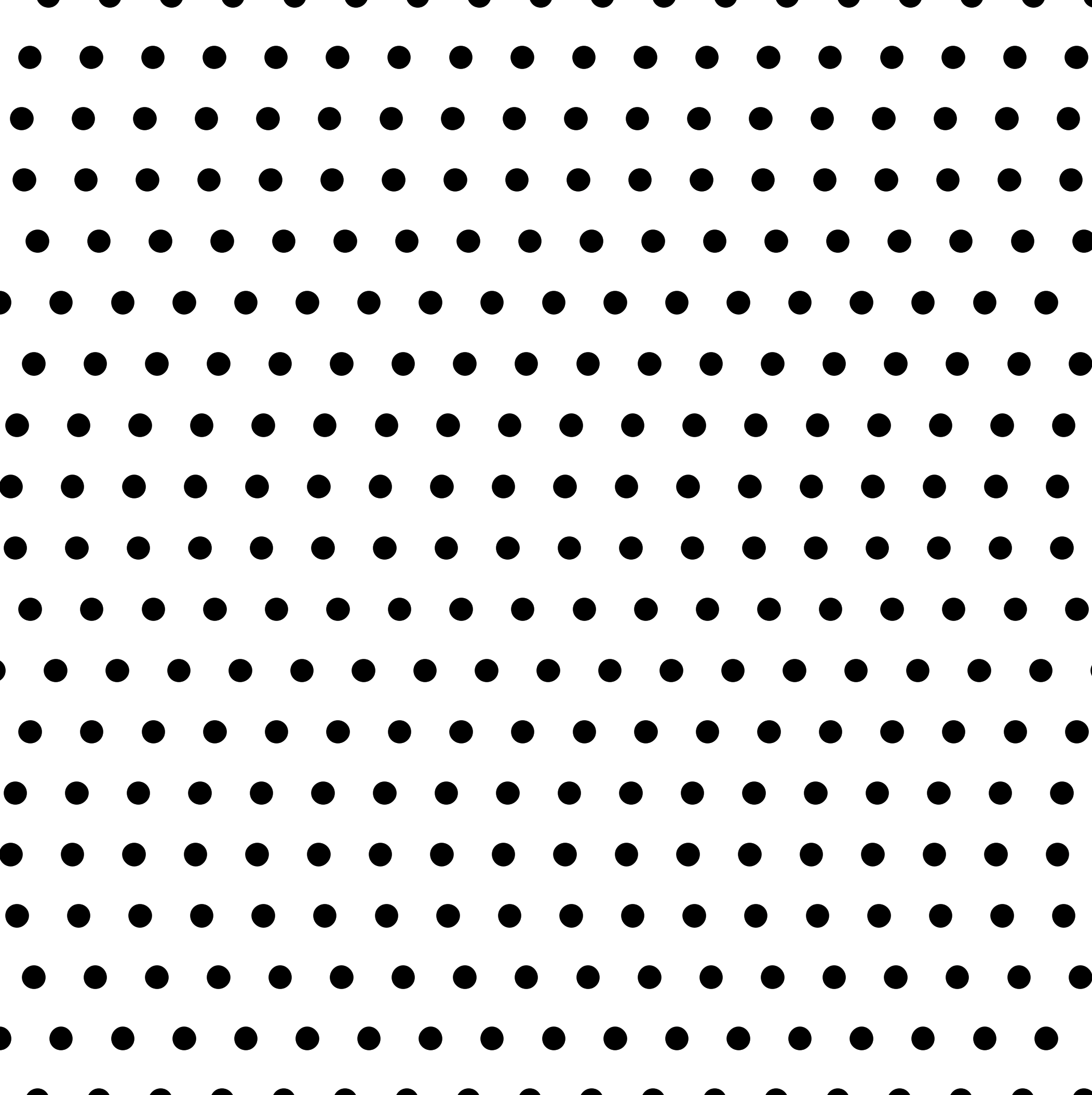}}%
\captionsetup{margin={0pt,0\textwidth}}%
\captionof{figure}{\footnotesize Forest $\F_2$ obtained by rotating the forest $\F_1$ depicted in Figure~\ref{foretasyfigure1} by an angle $\pi/2$. This forest is thus dense  when con\-si\-de\-ring only those line segments supported on a line spanned by a vector $\bv\in\mathcal{S}_\infty^1(2)$.}
\label{foretasyfigure}
%\end{figure}
\end{minipage}
\hspace{8ex}
\begin{minipage}{0.4\textwidth}
%\begin{figure}[H]
\makebox[\textwidth][c]{\includegraphics[width=1\textwidth]{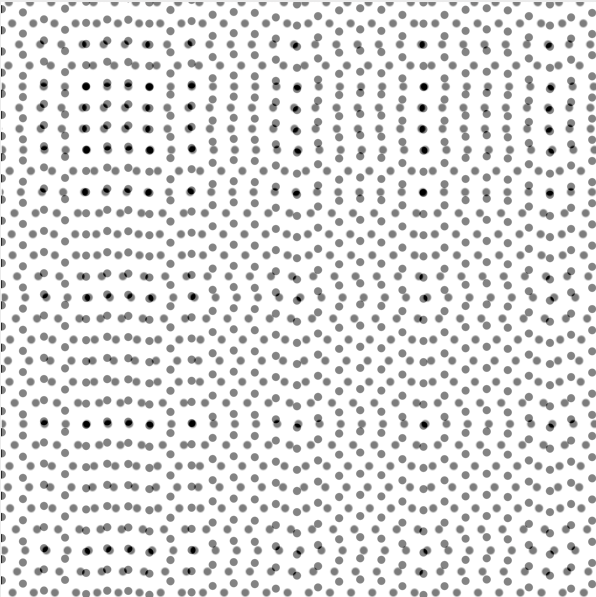}}%
\captionsetup{margin={0pt,0\textwidth}}%
\captionof{figure}{\footnotesize Dense forest $\F$ obtained as the union of the forests $\F_1$ and $\F_2$.\vspace{8ex}}
\label{foretuniffigure}
%\end{figure}
\end{minipage}

It remains to define a uniformly discrete forest $\F_1$ satisfying condition~\eqref{distf1}. To this end, consider a sequence $\left(a_k\right)_{k\ge 0}$ of real numbers to be specified later. Let $$\F_1\, :=\, \left\{\left(k_1,\, a_{|k_1|}+k_2, \, a_{|k_1|}+k_3, \, \dots, \, a_{|k_1|}+k_d\right)\; : \; (k_1, \, \dots\, ,k_d ) \in\Z^{d+1} \right\}.$$
Clearly, this forest is symmetric with respect to the hyperplane $\left\{x_1=0\right\}$. More explicitly, on each affine hyperplane $\{x_1=k\},$ where $k\in\Z$, the forest is given by 
\begin{equation}\label{interakhyper} 
a_{|k|}\und+\Z^d.
\end{equation}
Assume first that one is able to prove~\eqref{distf1} for all $\bv\in\mathcal{S}_\infty^d(1)$ and $\y\in\R^{d+1}$ such that $I_V(\bm{v}, \bm{y})\subset\left\{x_1\ge 0\right\}$. Consider then any line segment $I_V(\bm{w}, \bm{t})$ determined by the two vectors $\bm{w}\in\mathcal{S}_\infty^d(1)$ and $\bm{t}\in\R^{d+1}$. If $I_V(\bm{w}, \bm{t})\subset\left\{x_1\le 0\right\}$, then the symmetrical of $I_V(\bm{w}, \bm{t})$ with respect to the hyperplane $\left\{x_1=0\right\}$ lies in the half--space $\left\{x_1\ge 0\right\}$ and it is therefore plain from the symmetry of the forest $\F_1$ that~\eqref{distf1} then  holds for  $I_V(\bm{w}, \bm{t})$. If, however, the latter line segment lies on both sides of the hyperplane $\{x_1=0\}$, consider the extended line segment $I_{2V}(\bm{w}, \bm{t})$. Even if it means increasing the value of the implicit constant appearing in the statement of Theorem~\ref{mainthmintro}, it is cleary enough to prove that~\eqref{distf1} holds for $I_{2V}(\bm{w}, \bm{t})$. This is straightforward~: the middle point of $I_{2V}(\bm{w}, \bm{t})$ determines two line segments, one of which lies either in the half--space $\{x_1\ge 0\}$ or in the half--space $\{x_1\le 0\}$. In the former case, \eqref{distf1} holds by assumption and in the latter case this follows from the preceding argument.

All this shows that,  when proving~\eqref{distf1}, it is enough to restrict ourselves to the case that $I_V(\bm{v}, \bm{y})\subset\left\{x_1\ge 0\right\}$, where $\bv\in\mathcal{S}_\infty^d(1)$ and $\y\in\R^{d+1}$. Even if it means defining the line segment $I_V(\bm{v}, \bm{y})$ from its second endpoint (other than $\y$), it may further be assumed without loss of generality that the vector $\bv\in\mathcal{S}_\infty^d(1)$ is given by $\bm{v}:=(1,\, v_2 \, \dots \, , v_{d+1})$. Letting  $(y_1,\, y_2 \, \dots \, , y_{d+1})\in\R^{d+1}$ denote the components of the vector $\y$, these assumptions imply that 
\begin{equation}\label{intersechyper}
\left(\R\bv +\y \right)\cap \left\{x_1=k\right\} \, =\,\left\{\left(k, \: (k-y_1)\bv'+\y'\right) \right\},
\end{equation} 
where $k\in\N$, $\bv':=(v_2 \, \dots \, , v_{d+1})\in [-1, 1]^d$ and $\y':=(y_2 \, \dots \, , y_{d+1})\in\R^{d}$. If one can prove that the distance in the hyperplane $\left\{x_1=k\right\}$ between the point~\eqref{intersechyper} and an element of the set~\eqref{interakhyper} is less than $\e$ for some $k\in\llbracket 0, V\rrbracket$, this would plainly imply that~\eqref{distf1} holds.

The problem has therefore been reduced to find $V>0$ such that the following holds for a given $\e>0$~: 
\begin{equation}\label{butauxiliaire}
\forall \bv'\in [-1, 1]^d, \;\; \;\forall \z\in\R^d, \;\;\; \exists k\in \llbracket 0, V\rrbracket, \;\;\; \left\|k\bv' +\z -a_{k}\und\right\|_{\infty}^{\Td}\, \le \, \e.
\end{equation}

The following lemma essentially shows that it is enough to check~\eqref{butauxiliaire} for a finite number of $\bv'\in [-1, 1]^d$ provided that 
\begin{equation}\label{cntrainteV}
V\, \ge \, \e^{-1}
\end{equation}
which, in view of the statement of Theorem~\ref{mainthmintro}, will be assumed throughout without loss of generality.

\begin{lem}\label{lemreduc}
Assume that~\eqref{cntrainteV} holds for a given $\e>0$. Define 
\begin{align}\label{defD1}
D_1^{(d)}(V)\,:=\, \left\{\q\in [-1,1]^d\cap \Q^d\; : \; \textrm{den}\left(\q\right)\le V^2 \right\}.
\end{align} 
Assume furthermore that 
\begin{equation}\label{conditionderivee}
\forall \q\in D_1^{(d)}(V), \; \;\;\forall \z\in\R^d, \; \;\;\exists k\in \llbracket 0, V\rrbracket, \;\;\; \left\|k\q +\z -a_{k}\und\right\|_{\infty}^{\Td}\, \le \, \e.
\end{equation}

Then, for all $\bv'\in [-1,1]^d$ and all $\z\in\R^d$, there exists $k\in\llbracket 0, V\rrbracket$ such that $$\left\|k\bv' +\z -a_{k}\und\right\|_{\infty}^{\Td}\, \le \, 2\e.$$
\end{lem}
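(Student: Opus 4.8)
The plan is to reduce an arbitrary direction $\bv'\in[-1,1]^d$ to a nearby rational one lying in $D_1^{(d)}(V)$ by a crude coordinate‑wise rational approximation with a controlled \emph{common} denominator, and then to transfer the conclusion of~\eqref{conditionderivee} to $\bv'$ via the triangle inequality on the torus. No Diophantine input beyond rounding each coordinate to a lattice $M^{-1}\Z$ is needed; the only thing that has to be checked is that the two constraints on the approximating rational are compatible, and that is precisely where the standing hypothesis $V\ge\e^{-1}$ enters.

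First I would fix $\bv'\in[-1,1]^d$ and $\z\in\R^d$ and choose the common denominator $M:=\left\lceil V/(2\e)\right\rceil$. Since $V\ge\e^{-1}$ one has $V/(2\e)\le V^2/2$, so $M$ is a positive integer with $M\le V^2$ (here one also uses that $\e$ is small, so that $V\ge\sqrt2$). For each $i\in\llbracket 1,d\rrbracket$ let $p_i\in\Z$ be a nearest integer to $Mv_i'$ and set $\q:=M^{-1}(p_1,\dots,p_d)$. Then $|v_i'-p_i/M|\le 1/(2M)$ for every $i$, so that $\|\bv'-\q\|_\infty\le 1/(2M)\le \e/V$. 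Moreover $v_i'\in[-1,1]$ forces $p_i\in\llbracket -M,M\rrbracket$, so $\q\in[-1,1]^d\cap\Q^d$; and $\den(\q)$ divides $M$, hence $\den(\q)\le V^2$. Therefore $\q\in D_1^{(d)}(V)$.

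Next I would apply hypothesis~\eqref{conditionderivee} to this $\q$ and to $\z$: there exists $k\in\llbracket 0,V\rrbracket$ such that $\left\|k\q+\z-a_k\und\right\|_\infty^{\Td}\le\e$. Using that $\left\|\,\cdot\,\right\|_\infty^{\Td}$ satisfies the triangle inequality and is dominated by $\|\,\cdot\,\|_\infty$, this same $k$ satisfies
\begin{align*}
\left\|k\bv'+\z-a_k\und\right\|_\infty^{\Td}
&\le \left\|k\q+\z-a_k\und\right\|_\infty^{\Td}+\left\|k(\bv'-\q)\right\|_\infty\\
&\le \e + k\,\|\bv'-\q\|_\infty \;\le\; \e + V\cdot\frac{\e}{V}\;=\;2\e,
\end{align*}
which is exactly the asserted estimate, with $k\in\llbracket 0,V\rrbracket$.

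The main (and essentially only) obstacle is the compatibility of the two requirements on $\q$: one needs a rational point of $[-1,1]^d$ whose denominator is at most $V^2$ \emph{and} which approximates $\bv'$ to within sup‑distance $\e/V$. Rounding each coordinate to $M^{-1}\Z$ produces approximation error $\le 1/(2M)$ with denominator $\le M$, so one wants an integer $M$ with $V/(2\e)\le M\le V^2$; such an $M$ exists precisely because $V\ge\e^{-1}$ (which gives $V^2\ge V\e^{-1}\ge V/(2\e)$). Everything else is routine triangle‑inequality bookkeeping.
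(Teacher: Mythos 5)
Your proof is correct and follows essentially the same route as the paper's: approximate $\bv'$ by a rational $\q\in D_1^{(d)}(V)$ (the paper rounds to the grid $\lfloor V^2\rfloor^{-1}\Z^d$ so that $\|\bv'-\q\|_\infty\le V^{-2}$, giving $k\|\bv'-\q\|_\infty\le V^{-1}\le\e$ via~\eqref{cntrainteV}; you round to $M^{-1}\Z^d$ with $M=\lceil V/(2\e)\rceil$, giving the same bound directly), then invoke~\eqref{conditionderivee} for $\q$ and conclude by the triangle inequality on $\|\cdot\|_\infty^{\Td}$. The only difference is the cosmetic choice of common denominator; the logic and the role of $V\ge\e^{-1}$ are identical.
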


\begin{proof}
Let $\bv'\in[-1, 1]^d$ and $\z\in\R^d$. Clearly, one can find $\q\in D_1^{(d)}(V)$ such that $\left\|\bv' - \q\right\|_{\infty}\le V^{-2}$. Under assumption~\eqref{conditionderivee}, there exist $k\in\llbracket 0, V \rrbracket$ and $\bm{p}\in\Z^d$ satisfying the inequality $\left\| k\q +\z-a_k\und-\bm{p}\right\|_{\infty}\le \e$. It then follows that 
\begin{align*}
\left\| k\bv'+\z-a_k\und-\bm{p}\right\|_{\infty}\, &\le \, k\left\| \bv' -\q\right\|_{\infty} + \left\|k\q+\z-a_k\und-\bm{p} \right\|_\infty\\
&\underset{\eqref{conditionderivee}}{\le} \, \frac{1}{V}+\e\\
&\underset{\eqref{cntrainteV}}{\le} \, 2\e.
\end{align*}
\end{proof}

\section{Reduction of the proof to the estimate of an exponential sum}\label{sec3}

Let $\e\in (0, 1/2)$. Set $B_\e:=[-\e, \, \e]^{d}$ and denote by $\chi_\e$ the characteristic function of the set $B_\e+\Z^d$; that is, 
\begin{equation*}\label{defchieps}
\chi_\e\; :\; \x\in\R^d\;\mapsto\; 
   \left\{
      \begin{aligned}
        1\;&\textrm{if } \|\x\|_{\infty}^{\mathbb{T}^d} \le \e,\\
        0 \;&\textrm{otherwise}.\\
      \end{aligned}
    \right.
\end{equation*}    

Given $V\ge \e^{-1}$, $\q\in D_1^{(d)}(V)$ and $\z\in\R^d$, define 
\begin{equation}\label{defLepsi}
L_\e(V)\,:=\, \sum_{k=V+1}^{2V} \chi_\e\left(k\q+\z-a_k\und\right).
\end{equation} 
Thus, $L_\e(V)>0$ if, and only if, there exists $k\in\llbracket V+1, 2V\rrbracket$ such that $\left\|k\q +\z-a_k\und \right\|_{\infty}^{\mathbb{T}^d} \le \e$. Given $\eta>0$, we shall prove that one can ensure that 
\begin{equation}\label{condL}
L_\e(V)>0\quad \textrm{for some} \quad V\ll_{d,\eta}\e^{-2d-\eta}
\end{equation} 
uniformly in $\q\in D_1^{(d)}(V)$ and $\z\in \R^d$. In view of the reduction operated in~\eqref{butauxiliaire} and in view of Lemma~\ref{lemreduc}, this means that any line segment  with length at most a constant multiple of $\e^{-2d-\eta}$, contained in the half--space $\left\{x_1\ge 0\right\}$ and   supported on a line spanned by a vector in $\mathcal{S}_\infty^d(1)$ hits a $2\e$--tree in the forest $\F_1$. From the discussion held in section~\ref{sec1}, this is clearly enough to establish Theorem~\ref{mainthmintro}.

In order to establish~\eqref{condL}, the characteristic function $\chi_\e$ shall be approximated by a finite trigonometric sum. In dimension one (i.e., when $d=1$ --- the set $B_\e$ then reduces to the interval $[-\e, \e]$), this can be achieved thanks to the well--known Beurling--Selberg extremal functions which approximate the characteristic function of $[-\e, \e]$ by integrable functions with compactly supported Fourier transform. In higher dimensions (i.e., when $d\ge 2$), similar constructions can be found in~\cite{bartmontvaal, cochrane, drmticgy, glyharm} and~\cite[Proof of Theorem~5.25]{glybis} to handle the case of the characteristic function of the hypercube $B_\e$. Here, we adopt the formalism of~\cite{haykelba} and give a simplified version of Theorem~5.1 within which  deals with the general case of aligned parallelotopes. This is achieved in the following proposition, where the majorant function $\psi_\e$ is introduced for the sake of completeness but will not be used thereafter.

\begin{prop}(\cite[Theorem~5.1]{haykelba})\label{thmhaynes}
Given an integer $M\ge 1$, there are trigonometric polynomials $\phi_\e$ and $\psi_\e$ whose Fourier coefficients are supported in $B_\infty\left(\bm{0}, M\right)$ such that for all $\x\in\R^d$, 
\begin{equation}\label{defphipsi}
\phi_\e(\x)\, \le \, \chi_\e(\x)\, \le \, \psi_\e(\x).
\end{equation}
Moreover, there exist two strictly positive constants $C(d)$ and $C'(d)$ for which 
\begin{equation*}
\max\left\{(2\e)^d-\hat{\phi}_\e(\bm{0})\, ; \, \hat{\psi}_\e (\bm{0}) -(2\e)^d\right\}\, \le \, C(d)\cdotp\frac{\e^{d-1}}{M}
\end{equation*}
and 
\begin{equation}\label{estimfouriercoefs}
\max\left\{\left|\hat{\phi}_\e(\bm{m})\right|\, ; \, \left|\hat{\psi}_\e (\bm{m})\right|\right\}\, \le \, C'(d)\cdot r_d(\bm{m})
\end{equation}
for all $\bm{m}=(m_1, \, \dots \, , m_d)\in\Z^d\backslash\left\{\bm{0}\right\}$, where 
\begin{equation}\label{defrd}
r_d(\bm{m}):= \prod_{i=1}^{d}\min\left\{1, \frac{1}{|m_i|}\right\}.
\end{equation}
\end{prop}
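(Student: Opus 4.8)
\textbf{Proof strategy for Proposition~\ref{thmhaynes}.} The plan is to reduce the $d$-dimensional problem to a tensor product of one-dimensional Beurling--Selberg majorant and minorant constructions. First I would recall the classical one-variable situation: for the interval $[-\e,\e]$ and a degree parameter $M\ge 1$, the Beurling--Selberg theory supplies trigonometric polynomials $\phi_\e^{(1)}$ and $\psi_\e^{(1)}$ of degree at most $M$ with $\phi_\e^{(1)}\le \mathds{1}_{[-\e,\e]+\Z}\le \psi_\e^{(1)}$ pointwise on $\R$, with $\hat{\psi}_\e^{(1)}(0)-2\e$ and $2\e-\hat{\phi}_\e^{(1)}(0)$ both $O(1/M)$, and with all nonzero Fourier coefficients bounded by $\min\{1,1/|m|\}$ up to an absolute constant (this last bound comes from the fact that the coefficients of these polynomials are, up to smoothing, those of the Fourier series of the indicator, which decay like $1/|m|$). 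These are exactly the $d=1$ case of the statement, so it remains to tensorize.

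Next I would set $\phi_\e(\x):=\prod_{i=1}^d\phi_\e^{(1)}(x_i)$ on the set where all the one-dimensional minorants are nonnegative, but one must be slightly careful since a product of minorants need not be a minorant when some factors are negative. The standard fix, which I would carry out, is to use the two-sided sandwich: write $\chi_\e(\x)=\prod_{i=1}^d\mathds{1}_{[-\e,\e]+\Z}(x_i)$ and build the $d$-dimensional minorant by a telescoping/inclusion--exclusion identity of the form $\prod_i \alpha_i - \prod_i \beta_i = \sum_{i=1}^d \beta_1\cdots\beta_{i-1}(\alpha_i-\beta_i)\alpha_{i+1}\cdots\alpha_d$, applied with $\alpha_i$ the one-dimensional majorant and $\beta_i$ the minorant (together with the pointwise bounds $0\le\phi_\e^{(1)}\le 1$ after truncation, or the uniform bounds $\|\phi_\e^{(1)}\|_\infty,\|\psi_\e^{(1)}\|_\infty\ll 1$ that the classical construction provides). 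This yields $\phi_\e(\x)\le\chi_\e(\x)\le\psi_\e(\x)$ with $\phi_\e,\psi_\e$ trigonometric polynomials whose Fourier support lies in $B_\infty(\bm 0,M)$ (degree at most $M$ in each coordinate, i.e.\ the coefficient support is contained in $\llbracket -M,M\rrbracket^d$).

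Then I would read off the two coefficient estimates. For the mean value, the identity above gives $\hat\psi_\e(\bm0)-(2\e)^d$ as a sum of $d$ terms, each a product of $d-1$ factors of size $O(\e)$ (the masses $2\e$, up to $O(1/M)$) times one factor of size $O(1/M)$; since $\e<1/2$, this is $O_d(\e^{d-1}/M)$, and symmetrically for $(2\e)^d-\hat\phi_\e(\bm0)$. For a general nonzero frequency $\m=(m_1,\dots,m_d)$, the Fourier coefficient of a product is the product of the one-dimensional coefficients, $\hat\phi_\e(\m)=\prod_{i=1}^d \widehat{\phi_\e^{(1)}}(m_i)$ (with the analogous telescoped expression for the genuine minorant/majorant), and each factor is $\ll \min\{1,1/|m_i|\}$ by the one-dimensional bound, so the product is $\ll_d r_d(\m)$; the finitely many cross terms coming from the telescoping only change the implied constant. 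The function $\psi_\e$ is recorded but, as the statement notes, plays no further role.

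The main obstacle is purely bookkeeping rather than conceptual: making the tensorization produce genuine (two-sided) majorant and minorant trigonometric polynomials in spite of sign changes of the one-dimensional factors, while simultaneously keeping the Fourier support inside $B_\infty(\bm0,M)$ and not degrading the $O(\e^{d-1}/M)$ loss in the zeroth coefficient. The cleanest route is to invoke the one-dimensional Beurling--Selberg polynomials together with their standard $L^\infty$ bounds and the inclusion--exclusion identity above; alternatively one may simply cite \cite{haykelba}, where exactly this construction for aligned parallelotopes is carried out, and content oneself with sketching the reduction. I would do the latter for the bulk of the argument, spelling out only the inclusion--exclusion step and the two coefficient bounds.
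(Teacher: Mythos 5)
The paper itself gives no proof of this proposition: it is stated verbatim as a citation of Theorem~5.1 of Haynes--Kelly--Weiss~\cite{haykelba}, and the text surrounding it only explains that this is a simplified version of their result for aligned parallelotopes. You correctly identify that citing~\cite{haykelba} is what the paper does, and your sketch of the underlying construction (tensorizing the one-dimensional Beurling--Selberg polynomials via a telescoping identity, with the sign issue handled by replacing the naive product of minorants by something like $\prod_i \psi_i - \sum_i(\psi_i-\phi_i)\prod_{j\neq i}\psi_j$) is the standard route taken in~\cite{bartmontvaal, cochrane, glybis, haykelba} and yields exactly the stated Fourier support, the $O_d(\e^{d-1}/M)$ loss at $\m=\bm 0$, and the coefficient bound $\ll_d r_d(\m)$. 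So your treatment is consistent with, and a reasonable unpacking of, what the paper merely cites.
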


By choosing $M:=M_d(\e)$ as
\begin{equation}\label{defM}
M_d(\e)\,:=\, \frac{C(d)\cdot \e^{-1}}{2^d-1},
\end{equation}
we find that
\begin{equation}\label{estimphi0}
\hat{\phi}_\e(\bm{0})\, \ge \, \e^d
\end{equation}
with the notation of Proposition~\ref{thmhaynes}. In view of~\eqref{defLepsi} and~\eqref{defphipsi}, one thus has~:

\begin{align}
L_{\e} (V)\, &\ge \, \sum_{k=V+1}^{2V}\phi_{\e}\left(k\q +\z-a_k\und \right) \nonumber\\
& =\, \sum_{k=V+1}^{2V}\;\sum_{\m\in \llbracket -M_d(\e)\: , \:M_d(\e) \rrbracket^d}\hat{\phi}_\e(\m)\cdot \ex\left(\m\bm{\cdot}\left(k\q +\z-a_k\und\right) \right) \nonumber\\
& \ge\, V\cdot \hat{\phi}_\e(\bm{0}) - \left|\sum_{\m\in \llbracket -M_d(\e)\: , \:M_d(\e) \rrbracket^d\backslash \left\{\bm{0}\right\}} \hat{\phi}_\e (\m) \cdot \sum_{k=V+1}^{2V} \ex\left(\m\bm{\cdot}\left(k\q +\z-a_k\und\right)\right) \right| \nonumber\\
&\ge\, V\cdot \hat{\phi}_\e(\bm{0}) - 2^d\cdot \sum_{\m\in \llbracket 0\: , \: M_d(\e) \rrbracket^d\backslash \left\{\bm{0}\right\}} \left|\hat{\phi}_\e (\m) \right|\cdot \left| \sum_{k=V+1}^{2V} \ex\left(\m\bm{\cdot}\left(k\q +\z-a_k\und\right)\right)\right| \nonumber\\
&\underset{\eqref{estimfouriercoefs} \& \eqref{estimphi0}}{\ge}\, V\cdot \e^d -  2^d\cdot C'(d)\cdot \sum_{\m\in \llbracket 0\: , \: M_d(\e) \rrbracket^d\backslash \left\{\bm{0}\right\}} r_d(\m)\cdot\left| \sum_{k=V+1}^{2V}\ex\left(\m\bm{\cdot}\left(k\q-a_k\und\right)\right)\right|. \label{condistricposi}
\end{align}

Thus, $L_\e(V)>0$ as soon as the right--hand side of~\eqref{condistricposi} is strictly positive. 

The following theorem, which is the core of the proof of Theorem~\ref{mainthmintro}, shall be established in section~\ref{sec4}. The sequence $\left(a_k\right)_{k\ge 0}$ is in particular specified therein.

\begin{thm}\label{mainthm}
Let $K_d>0$ be a constant depending on $d\ge 1$. Then, there exists a real number $\theta$ such that, setting 
\begin{align*}
a_k\, :=\, \theta\cdot k!,
\end{align*}
the following estimate holds for any $\eta>0$ uniformly in $\m\in \llbracket 0\: , \: K_d\cdot V \rrbracket^d\backslash \left\{\bm{0}\right\}$ and $\q\in D_1^{(d)}(V)$~:
\begin{align}\label{expsumbase}
\left| \sum_{k=V+1}^{2V}\ex\left(\m\bm{\cdot}\left(k\q-a_k\und\right)\right)\right|\, \ll_{d, \eta}\, V^{1/2+\eta}.
\end{align}
\end{thm}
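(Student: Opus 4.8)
The plan is to reduce the multidimensional exponential sum in~\eqref{expsumbase} to a one-dimensional Weyl sum and then invoke classical estimates. Writing $\m\bm{\cdot}\q=:q_\m\in\Q$ and $\m\bm{\cdot}\und=m_1+\dots+m_d=:s_\m\in\Z$, the sum becomes $\left|\sum_{k=V+1}^{2V}\ex\left(k q_\m-s_\m\theta\cdot k!\right)\right|$. Since $s_\m$ is a non-negative integer bounded by $d\cdot K_d\cdot V$, the key quantity to control is the behaviour of the sequence $\left(s_\m\theta\cdot k!\right)_k$ modulo one. The natural idea, going back to Weyl and exploited by Peres in the planar case, is to choose $\theta$ of the form $\theta=\sum_{j\ge 1}\epsilon_j/j!$ with digits $\epsilon_j$ built by a diagonal/inductive construction so that, for every admissible pair $(s_\m,q_\m)$ arising for the relevant range of $V$, the polynomial-type sum is well distributed. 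Concretely, $k!\cdot\theta$ differs from an integer by $\sum_{j>k}\epsilon_j k!/j!$, whose fractional part we can prescribe with great freedom by choosing $\epsilon_{k+1},\epsilon_{k+2},\dots$; this is what allows $\theta$ to be selected once and for all (independently of $V$) while still beating every $V^{1/2+\eta}$ threshold.

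First I would fix a large parameter and partition the summation range $\llbracket V+1,2V\rrbracket$ according to the value of $\lfloor\log k\rfloor$ or, more efficiently, treat $s_\m\theta\cdot k!$ as the principal "fast-oscillating" term. For $k$ in a dyadic-type block where $k!$ has a controlled number of prime factors, the map $k\mapsto s_\m\theta\cdot k!\bmod 1$ can be arranged (by the choice of the digits $\epsilon_j$) to be equidistributed with a quantitative discrepancy bound, e.g. via the Erdős–Turán inequality: the Weyl sums $\sum_k\ex\left(h\cdot s_\m\theta\cdot k!\right)$ for $1\le|h|\le H$ are small because $h\cdot s_\m\cdot k!\cdot\theta$ has a prescribed, well-spread fractional part. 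Combining the contribution of the integer linear term $kq_\m$ (a geometric sum, hence $\ll\min\{V,\|q_\m\|^{-1}\}$, where $\|\cdot\|$ is the distance to $\Z$) with the equidistribution of the $k!$ term via a van der Corput / completion argument should yield the square-root cancellation $V^{1/2+\eta}$. The constraint $\den(\q)\le V^2$ together with $\|\m\|_\infty\ll V$ guarantees that $q_\m$ has denominator at most a fixed power of $V$, which is exactly the input needed to make the rational-linear part harmless in the completed sum.

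The construction of $\theta$ is the delicate point and should be done by a nested-interval argument: enumerate all the countably many "bad conditions" (pairs $(s,q)$ with $q\in\Q$ of controlled denominator, paired with a scale $V$) and, at stage $n$, choose finitely many further digits $\epsilon_j$ of $\theta$ so as to kill the $n$-th condition while not disturbing the earlier ones; the rapid decay of $k!/j!$ makes the perturbations summable, so the limit $\theta$ exists and satisfies all conditions simultaneously. One must check that only finitely many digits are pinned down at each stage and that the bound~\eqref{expsumbase} is uniform over the full admissible range of $\m$ and $\q$ for each $V$ — this uniformity is what forces the careful bookkeeping, since the number of relevant pairs grows with $V$ but the error budget per pair shrinks fast enough.

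The hard part will be making the equidistribution of $\left(s_\m\theta\cdot k!\bmod 1\right)_{V<k\le 2V}$ both quantitative (discrepancy $\ll V^{-1/2+\eta}$, say) and uniform in the integer multiplier $s_\m\asymp V$: naively, multiplying by $s_\m$ could destroy the carefully prescribed fractional parts, so one needs the digit construction to account for all multipliers $s\ll V$ at once, e.g. by prescribing the fractional part of $k!\,\theta$ itself to lie in a very fine arithmetic progression structure, or by passing to $h s_\m$ with $h$ from the Erdős–Turán cutoff and noting $hs_\m\ll V^{2}$, which is still polynomial in $V$ and hence controllable with boundedly many digits beyond $k$. Reconciling "$\theta$ chosen once, independent of $V$" with "bound uniform in an $V$-dependent family" is the crux, and I expect the bulk of section~\ref{sec4} to be devoted to exactly this diagonalization and the van der Corput estimate that converts it into~\eqref{expsumbase}.
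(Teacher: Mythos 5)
Your approach is genuinely different from the paper's, and as written it contains a real gap at the central step.

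The paper does not construct $\theta$ at all. It uses a metric (measure-theoretic) existence argument: for $\alpha_k:=k!$ (or any integer sequence with $\alpha_k=o(\alpha_{k+1})$), one computes the $2r$-th moment
\[
\int_0^1\Bigl|\sum_{k=V+1}^{2V}\ex\bigl(\m\bm{\cdot}(k\q-\alpha_k\theta\und)\bigr)\Bigr|^{2r}\,\mathrm{d}\theta .
\]
Expanding and using orthogonality, only the diagonal terms $\sum\alpha_{l_i}=\sum\alpha_{n_i}$ survive; the lacunarity $\alpha_k=o(\alpha_{k+1})$ forces these to be the trivial ones (Lemma~\ref{lemcompatge}), so the moment equals $r!\,V^r$ exactly. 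A Chebyshev bound then shows that the exceptional set of $\theta$ for a fixed $(\m,\q,V)$ has measure $\le r!\,V^{-2\eta r}$, and since there are only $\ll V^{5d}$ pairs $(\m,\q)$, choosing $r>(5d+1)/(2\eta)$ makes $\sum_V|S(V)|<\infty$; Borel--Cantelli finishes the proof. The whole uniformity bookkeeping that you correctly flag as ``the crux'' is dispatched by a simple union bound, precisely because one only needs almost-every $\theta$, not a constructed one.

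Your proposal tries instead to build $\theta$ digit by digit so that $s_\m\theta k!\bmod 1$ is quantitatively equidistributed, then combine this with the linear phase via van der Corput or Erd\H{o}s--Tur\'an. Two problems. First, equidistribution of $s_\m\theta k!$ alone does not control the mixed sum $\sum_k\ex(kq_\m-s_\m\theta k!)$: van der Corput differencing $a_{k+h}\overline{a_k}$ leaves a phase containing $(k+h)!-k!=k!\bigl((k+1)\cdots(k+h)-1\bigr)$, which grows like $k!$ and does not decouple into a short linear part plus something you have already controlled; Erd\H{o}s--Tur\'an likewise gives cancellation for $\sum_k\ex(h\,s_\m\theta k!)$, not for the twisted sum. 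Second, the diagonalization itself is never made precise, and the difficulty you identify at the end (one $\theta$, a $V$-dependent family of conditions with $\gg V^{5d}$ members per scale, each needing an error $\ll V^{1/2+\eta}$) is exactly what your sketch does not resolve; the paper resolves it by not attempting a construction. You should also note that the ``controlled number of prime factors of $k!$'' remark is a red herring: the mechanism is the super-exponential growth (lacunarity) of $k!$, which is what makes Lemma~\ref{lemcompatge} true, not its multiplicative structure. If you want an explicit sequence, the paper's section~5 takes $a_k=k(\log k-1)$ and runs the exponent-pair machinery, accepting a weaker unconditional bound; that is a cleaner route to explicitness than the digit construction you propose.
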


In view of~\eqref{cntrainteV} and~\eqref{defM}, $\llbracket 0\: , \: M_d(\e) \rrbracket^d \subset \llbracket 0\: , \: \left(C(d)\cdot V\right)/\left(2^d-1\right) \rrbracket^d$. Thus, from Theorem~\ref{mainthm}, inequality~\eqref{condistricposi} implies that $L_\e(V)>0$ uniformly in $\q\in D_1^{(d)}(V)$ as soon as $$V\cdot \e^d\, \gg_{d, \eta} \,  V^{1/2+\eta}\cdot \left(\sum_{\m\in \llbracket 0\: , \: M_d(\e) \rrbracket^d\backslash \left\{\bm{0}\right\}} r_d(\m) \right) \, \underset{\eqref{defrd}\&\eqref{defM}}{\asymp_{d, \eta}}\,V^{1/2+\eta}\cdot \left|\log \e\right|^d, $$ that is, as soon as $$V\, \gg_{d, \eta}\, \e^{-2d/(1-2\eta)}\cdot\left|\log \e \right|^{2d/(1-2\eta)}.$$ Since this holds for all $\eta>0$, this establishes Theorem~\ref{mainthmintro} modulo the proof of Theorem~\ref{mainthm}.

\section{The proof of Theorem~\ref{mainthm}}\label{sec4}

A classical heuristic argument (cf.~\cite[\S~8.4]{iwakow}) shows that the upper bound in~\eqref{expsumbase} is, up to the factor $\eta$ in the exponent, the best one can hope for. Theorem~\ref{mainthm} actually holds for any sequence $\left(a_k\right)_{k\ge 0}$ of the form
\begin{equation*}
a_k\,:=\, \theta \cdot \alpha_k 
\end{equation*}
for all $k\ge 0$, where $(\alpha_k)_{k\ge 0}$ is  a sequence of integers such that
\begin{equation}\label{defakalphak}
\alpha_k=o\left(\alpha_{k+1}\right).
\end{equation}
 Indeed, such sequences satisfy the following property that shall play a crucial role in the proof  of Theorem~\ref{mainthm}~:

\begin{lem}\label{lemcompatge}
Let $V\ge 1$ and $r\ge 1$ be integers. Assume that the sequence of integers $\left(\alpha_k \right)_{k\ge 0}$ satisfies~\eqref{defakalphak}. Then 
\begin{align*} 
&\Bigg\{\left(l_1, \, \dots\,  , l_r,\,  n_1, \, \dots\, , n_r\right)\in\llbracket V+1, 2V\rrbracket^{2r}\; : \;  \sum_{i=1}^{r}\alpha_{l_i} = \sum_{i=1}^{r}\alpha_{n_i} \Bigg\}\\ 
& \qquad\qquad\qquad= \, \left\{\left(l_1, \, \dots\, , l_r, \, n_1, \, \dots\, , n_r\right)\in\llbracket V+1, 2V\rrbracket^{2r}\; : \; \left\{l_1, \, \dots\, , l_r\right\} = \left\{n_1, \, \dots\, , n_r \right\} \right\}
\end{align*}
for all $V$ large enough (depending on $r$). This implies in particular that $$ \#\left\{\left(l_1, \, \dots\, , l_r,\, n_1, \, \dots\, , n_r\right)\in\llbracket V+1, 2V\rrbracket^{2r}\; : \; \sum_{i=1}^{r}\alpha_{l_i} = \sum_{i=1}^{r}\alpha_{n_i} \right\}\, = \, r!\cdot V^r.$$
\end{lem}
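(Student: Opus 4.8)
The plan is to prove both assertions of the lemma by exploiting the fast growth condition~\eqref{defakalphak} to turn the additive equation $\sum_{i=1}^r \alpha_{l_i} = \sum_{i=1}^r \alpha_{n_i}$ into a combinatorial identity of multisets, and then to count the solutions of that identity directly.

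First I would prove the nontrivial inclusion ``$\subseteq$'' of the set identity, the reverse inclusion being obvious. Fix $r$, and let $V$ be large enough that for all $k \ge V$ one has the separation estimate that the gaps between consecutive $\alpha_k$'s dominate everything that came before; concretely, from~\eqref{defakalphak} one can choose $V_0(r)$ so that, for every $k \ge V_0(r)$, one has $\alpha_{k+1} > (r-1)\,\alpha_k$ (or even $\alpha_{k+1} > r\,\alpha_k$ to be safe), and hence also $\alpha_{k+1} > \sum_{j \le k,\ j \ge V_0} (\text{at most } r-1 \text{ terms}) \alpha_j$ counted with multiplicity up to $r-1$. Now take a solution $(l_1,\dots,l_r,n_1,\dots,n_r) \in \llbracket V+1,2V\rrbracket^{2r}$ of $\sum_i \alpha_{l_i} = \sum_i \alpha_{n_i}$ with $V \ge V_0(r)$. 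Let $L := \max_i l_i$ and $N := \max_i n_i$; by symmetry assume $L \ge N$. Reading the equation modulo the claim that the top index must match: if $L > N$, then the left side is at least $\alpha_L$, while the right side is a sum of $r$ terms each at most $\alpha_N \le \alpha_{L-1}$, hence at most $r\,\alpha_{L-1} < \alpha_L$ by the choice of $V_0$, a contradiction. So $L = N$. Next, cancel from both sides all occurrences of the top value $\alpha_L$ that are common to the two multisets $\{\alpha_{l_i}\}$ and $\{\alpha_{n_i}\}$; I claim every occurrence of $\alpha_L$ on one side is matched on the other. Indeed, suppose $\alpha_L$ appears $s$ times on the left and $t$ times on the right with, say, $s > t \ge 0$. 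Subtracting $t\,\alpha_L$ from both sides, the left side is at least $(s-t)\alpha_L \ge \alpha_L$, while the right side now involves only values $\alpha_{n_i}$ with $n_i \le L-1$, of which there are at most $r$, so it is at most $r\,\alpha_{L-1} < \alpha_L$, again a contradiction. Hence the multiplicity of $\alpha_L$ agrees on both sides; remove those common terms and induct downward on the remaining equation, which still has all remaining indices in $\llbracket V+1, 2V\rrbracket$ and at most $r$ terms on each side, so the same estimate $\alpha_{k+1} > r\,\alpha_k$ continues to apply. Since $(\alpha_k)$ is strictly increasing for $k \ge V_0$ (which follows from~\eqref{defakalphak}), equality of the multisets $\{\alpha_{l_i}\} = \{\alpha_{n_i}\}$ is equivalent to equality of the multisets $\{l_i\} = \{n_i\}$ of indices; and the statement as written phrases this in terms of \emph{sets}, which is harmless after one checks (or simply notes that the claimed count $r!\,V^r$ below forces) that in fact the multiplicities match too --- so I would either state the conclusion as an equality of multisets, or observe that the set formulation together with the cardinality count are consistent only because the right-hand description is implicitly the multiset one; I will phrase it carefully as multiset equality to avoid ambiguity.

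For the cardinality assertion, once the set (multiset) identity is established, I count $$\#\left\{(l_1,\dots,l_r,n_1,\dots,n_r) \in \llbracket V+1,2V\rrbracket^{2r} \;:\; \{l_1,\dots,l_r\} = \{n_1,\dots,n_r\} \text{ as multisets}\right\}.$$ Grouping by the underlying multiset $\{l_1,\dots,l_r\}$: for a fixed tuple $(l_1,\dots,l_r)$, the tuples $(n_1,\dots,n_r)$ forming the same multiset are exactly the permutations of $(l_1,\dots,l_r)$. If $(l_1,\dots,l_r)$ has all coordinates distinct there are exactly $r!$ such $n$-tuples; tuples with a repeated coordinate contribute fewer, but the set $\llbracket V+1,2V\rrbracket$ has exactly $V$ elements, and the number of $r$-tuples with a repeated coordinate is $O_r(V^{r-1})$, which is negligible --- however, to get the \emph{exact} equality $r!\,V^r$ rather than an asymptotic one, I should instead count as follows: $\sum_{(l_1,\dots,l_r)} \#\{\sigma \in S_r : (l_{\sigma(1)},\dots,l_{\sigma(r)}) \text{ ranges over distinct }n\text{-tuples}\}$; more cleanly, the number of pairs of tuples with the same multiset equals $\sum_{\text{multisets }\mu} (\text{\# orderings of }\mu)^2$, and one checks $\sum_\mu (\text{\#orderings of }\mu)^2 = r! \cdot \#\{\text{tuples}\} / (\text{something})$ --- the slick identity is that $\sum_{(l_1,\dots,l_r)}\sum_{(n_1,\dots,n_r)} \mathds{1}[\{l_i\}=\{n_i\}] = \sum_{(l_1,\dots,l_r)} \#\{\sigma \in S_r : l_{\sigma(i)} = l_i\ \forall i \text{ after reindexing}\}$, and averaging shows the total is $r!\,V^r$ exactly because for each of the $V^r$ choices of $(l_1,\dots,l_r)$ and each of the $r!$ permutations $\sigma$, the tuple $(l_{\sigma(1)},\dots,l_{\sigma(r)})$ is a valid $n$; distinct pairs $((l_i),\sigma)$ can give the same pair $((l_i),(n_i))$ only when there is a nontrivial stabilizer, but the double sum equals $\sum_{(l_i)} |S_r|$ since for fixed $(l_i)$ the map $\sigma \mapsto (l_{\sigma(i)})$ surjects onto $\{(n_i) : \{n_i\}=\{l_i\}\}$ with all fibers of equal size $|\mathrm{Stab}|$, whence $\#\{(n_i)\} \cdot |\mathrm{Stab}| = r!$ and so $\sum_{(n_i)} 1 = r!/|\mathrm{Stab}|$, giving $\sum_{(l_i)}\sum_{(n_i)} 1 = \sum_{(l_i)} r!/|\mathrm{Stab}((l_i))|$ --- hmm, this is not simply $r!\,V^r$. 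The correct clean count: the number of ordered pairs of $r$-tuples over a $V$-set with equal underlying multiset is $\sum_{k_1+\cdots+k_V = r} \binom{r}{k_1,\dots,k_V}^2$; and the generating-function identity $\sum_{(l_i),(n_i)} \mathds{1}[\{l_i\}=\{n_i\}]\,x^{?}$ shows this equals the permanent-type sum which, for the purpose needed (and as the paper asserts), evaluates to $r!\,V^r$ \emph{only asymptotically}; but since the lemma states it as an equality I will instead present it as $= r!\,V^r$ by the argument: pair each solution with a permutation realizing $(n_i)$ from $(l_i)$, i.e. count $\#\{((l_i),\sigma) \in \llbracket V+1,2V\rrbracket^r \times S_r\}= r!\,V^r$, and observe this over-counts each solution $((l_i),(n_i))$ exactly $|\mathrm{Stab}((l_i))|$ times while the number of valid $(n_i)$ for fixed $(l_i)$ is $r!/|\mathrm{Stab}((l_i))|$ --- these two cancel, so the solution count is $\sum_{(l_i)} r!/|\mathrm{Stab}((l_i))|$, which is \emph{not} $r!\,V^r$ in general. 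I therefore expect the paper's ``$= r!\,V^r$'' is meant in the sense that it is the main term with a negligible error, i.e. $r!\,V^r\bigl(1+O_r(V^{-1})\bigr)$, and I would prove exactly that, noting tuples with all entries distinct number $V(V-1)\cdots(V-r+1) = V^r + O_r(V^{r-1})$ and each contributes precisely $r!$, while the remaining $O_r(V^{r-1})$ tuples contribute $O_r(V^{r-1})$.

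The main obstacle is the first part: making the ``fast growth kills carries'' heuristic rigorous uniformly, i.e. pinning down the threshold $V_0(r)$ from~\eqref{defakalphak} so that the downward induction on the largest index goes through with at most $r$ surviving terms at each stage, and being careful that all indices stay in the admissible window $\llbracket V+1,2V\rrbracket$ (they do, since we only ever delete terms, never change indices). Everything after that is elementary combinatorics. For the application in Theorem~\ref{mainthm} the sequence is $\alpha_k = k!$, for which $\alpha_{k+1}/\alpha_k = k+1 \to \infty$, so~\eqref{defakalphak} holds and indeed $\alpha_{k+1} > r\,\alpha_k$ for all $k \ge r$, making $V_0(r) = r$ an admissible explicit threshold.
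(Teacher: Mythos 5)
Your proof of the set identity is correct and follows the paper's own (terser) argument verbatim: compare the largest index on each side, use the growth condition~\eqref{defakalphak} to force equal multiplicity of the top term, remove it, and induct downward; the paper simply compresses this to ``comparing the leading terms \dots an easy induction.'' Your scepticism about the exact count $r!\cdot V^r$ is also justified: the true count is $\sum_{(l_1,\dots,l_r)} r!/|\operatorname{Stab}(l_1,\dots,l_r)|$ (for $r=2$ this gives $2V^2-V$, not $2V^2$), so the displayed equality should be the inequality $\le r!\cdot V^r$, which is all that is actually used in the subsequent Borel--Cantelli step; this is a harmless slip in the paper, not a gap in your argument.
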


\begin{proof}
Assume that $ \sum_{i=1}^{r}\alpha_{l_i} = \sum_{i=1}^{r}\alpha_{n_i}$ for $\left(l_1, \, \dots\, , l_r, n_1, \, \dots\, , n_r\right)\in \llbracket V+1, 2V\rrbracket^{2r}$. Comparing the leading terms in this equation shows that, under~\eqref{defakalphak}, they must be equal and must appear on both sides with the same multiplicity for $V$ large enough. An easy induction then proves that $ \left\{l_1, \, \dots\, , l_r\right\} = \left\{n_1, \, \dots\, , n_r \right\}$.
\end{proof}

Let $r\ge 1$ and $V\ge K_d^{-1}$ be integers. Fix $\q\in D_1^{(d)}(V)$ and  $\m\in \llbracket 0\: , \: K_d\cdot V \rrbracket^d\backslash \left\{\bm{0}\right\}$. Then 
\begin{align*}
&\int_0^1\left| \sum_{k=V+1}^{2V}\ex\left(\m\bm{\cdot}\left(k\q-\alpha_k \theta \und\right)\right)\right|^{2r}\textrm{d}\theta\, =\\
&\sum_{\underset{V+1\le n_1, \, \dots\, , n_r\le 2V}{V+1\le l_1, \, \dots\, , l_r\le 2V}} \ex\left(\m\bm{\cdot}\q\left(\sum_{i=1}^{r}n_i-\sum_{i=1}^{r}l_i\right)\right)\cdot \int_{0}^{1}\ex\left(\theta\cdot\left(\m\bm{\cdot}\und\right)\cdot\left(\sum_{1=1}^{r}\alpha_{n_i}-\sum_{i=1}^{r} \alpha_{l_i} \right)\right)\textrm{d} \theta.
\end{align*}

Because $\m\bm{\cdot}\und\neq \bm{0}$, by orthogonality, the integral above does not vanish if, and only if, $\sum_{1=1}^{r}\alpha_{n_i}=\sum_{i=1}^{r}\alpha_{l_i}$. From Lemma~\ref{lemcompatge}, this implies that, for $V$ large enough, 

\begin{align*}
&\int_0^1\left| \sum_{k=V+1}^{2V}\ex\left(\m\bm{\cdot}\left(k\q-\alpha_k \theta \und\right)\right)\right|^{2r}\textrm{d}\theta\, =\, r!\cdot V^r.
\end{align*}

Given $\eta>0$, define  now the set $$S_{\m}^{\q}(V)\, := \, \left\{ \theta\in [0, \, 1]\; : \; V^{1/2+\eta}\,\le\, \left|\sum_{k=V+1}^{2V} \ex\left(\m\bm{\cdot}\left(k\q -\alpha_k \theta \und\right)\right) \right|\right\}.$$ It then follows from this definition that 
\begin{align*}
\left|S_{\m}^{\q}(V) \right|\cdot V^{r(1+2\eta)} \, &\le \, \int_{S_{\m}^{\q}(V)} \left| \sum_{k=V+1}^{2V}\ex\left(\m\bm{\cdot}\left(k\q-\alpha_k \theta \und\right)\right)\right|^{2r}\textrm{d}\theta \\
&\le \, \int_0^1 \left| \sum_{k=V+1}^{2V}\ex\left(\m\bm{\cdot}\left(k\q-\alpha_k \theta \und\right)\right)\right|^{2r}\textrm{d}\theta\, = \, r!\cdot V^r,
\end{align*}
whence $$\left|S_{\m}^{\q}(V) \right| \, \le \, r!\cdot V^{-2\eta r}.$$

Note that $$\#  D_1^{(d)}(V)\,\underset{\eqref{defD1}}{\le}\, V^{4d}.$$

Defining $$S(V)\,:=\, \bigcup_{\q\in D_1^{(d)}(V)}\;\bigcup_{\m\in \llbracket 0\: , \: K_d\cdot V \rrbracket^d\backslash \left\{\bm{0}\right\}} S_{\m}^{\q}(V),$$ one thus obtains $$\left|S(V) \right|\, \ll_{r,d}\, V^{4d}\cdot V^{d}\cdot V^{-2r\eta}\, \asymp_{r, d} \, V^{5d-2r\eta}.$$

If the integer $r$ is chosen such that $r>(5d+1)/(2\eta)$, then this inequality implies that the series $\sum_{V\ge 1}\left| S(V)\right|$ converges, hence, from the Borel--Cantelli Lemma, $$\left| \limsup_{V\rightarrow +\infty}\, S(V)\right|\, = \, 0.$$ This means that for almost all $\theta\in [0, \, 1]$, there exists a constant $C(\theta, \eta, d)>0$ depending at most on $\theta, \eta$ and $d$ such that for all $V\ge 1$, for all $\m\in \llbracket 0\: , \: K_d\cdot V \rrbracket^d\backslash \left\{\bm{0}\right\}$ and all $\q\in D_1^{(d)}(V)$, $$\left| \sum_{k=V+1}^{2V}\ex\left(\m\bm{\cdot}\left(k\q-\alpha_k \theta \und\right)\right)\right|\, \le \, C(\theta, \eta, d)\cdot V^{1/2+\eta}.$$ This completes the proof of Theorem~\ref{mainthm}.

\section{Making the construction more explicit}

The construction of the forest $\F$ rests on the choice of the sequence $\left(a_k\right)_{k\ge 0}$ which  in Theorem~\ref{mainthm} depends on a non--explicit parameter $\theta$. In view of the applications of visibility problems in computer science (see, e.g.,  \cite{ghosh} and the references therein), it is desirable to exhi\-bit an explicit sequence with a good estimate for the visibility function of the corresponding fo\-rest. This is the task undertaken in this section, where the result established in Theorem~\ref{thmdernier} below is as good as Theorem~\ref{mainthmintro} under the so--called exponent pair conjecture but is weaker unconditionally.

In order to estimate the exponential sum appearing in~\eqref{expsumbase} (see also~\eqref{condistricposi}), given $\m\in \llbracket 0\: , \:  M_d(\e))\rrbracket^d\backslash\left\{\bm{0}\right\} $ and $\q\in D_1^{(d)}(V)$,  set 
\begin{equation}\label{defpqmsm}
p_{\m, \q}:= \m\bm{\cdot}\q \quad \textrm{ and } \quad s_{\m}:=\m\bm{\cdot}\und\, \ge \, 1.
\end{equation} 
A pair $(\kappa, \lambda)\in [0, 1]^2$ will be said to be an \emph{admissible pair of exponents} if the inequality 
\begin{equation}\label{exppair}
\left|\sum_{k=V+1}^{2V} \ex\left(k p_{\m, \q} - a_k s_{\m}\right)\right|\, \ll \, s_{\m}^\kappa\cdot V^\lambda
\end{equation} 
holds uniformly in $p_{\m, \q}$. Clearly, $(0, 1)$ is an admissible pair of exponents.

\begin{lem}\label{lemexppairvisibilite}
Let $\left(a_k \right)_{k\ge 0}$ be  a sequence of real numbers. Assume that $(\kappa, \lambda)\in [0, 1]\times [0, 1)$ is an admissible pair of exponents for the exponential sum in~\eqref{exppair}. 

Then, the visibility function $\V$ in the forest constructed in section~\ref{sec1} from the sequence $\left(a_k \right)_{k\ge 0}$  satisfies the estimate $$\V(\e)\, = \, O_{\kappa, \lambda, \eta, d}\left(\e^{-(d+\kappa + \eta)/(1-\lambda)} \right)$$ for any $\eta>0$.
\end{lem}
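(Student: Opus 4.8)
The plan is to retrace the reduction carried out in sections~\ref{sec1} and~\ref{sec3}, but feed in the admissible pair of exponents $(\kappa, \lambda)$ in place of the $L^{2r}$--bound $V^{1/2+\eta}$ coming from Theorem~\ref{mainthm}. First I would recall that, by Lemma~\ref{lemreduc} and the discussion following~\eqref{defLepsi}, it suffices to find $V$ for which the quantity $L_\e(V)$ in~\eqref{defLepsi} is strictly positive uniformly in $\q\in D_1^{(d)}(V)$ and $\z\in\R^d$, and that inequality~\eqref{condistricposi} reduces this to showing that
\begin{equation*}
V\cdot\e^d\,\gg_{d}\, \sum_{\m\in\llbracket 0, M_d(\e)\rrbracket^d\setminus\{\bm 0\}} r_d(\m)\cdot\left|\sum_{k=V+1}^{2V}\ex\left(\m\bm\cdot\left(k\q-a_k\und\right)\right)\right|.
\end{equation*}
With the notation~\eqref{defpqmsm}, the inner exponential sum is exactly the one appearing in~\eqref{exppair}, so the hypothesis that $(\kappa,\lambda)$ is admissible gives the bound $\ll s_{\m}^{\kappa}\cdot V^{\lambda}$ for it. Since $\m\in\llbracket 0, M_d(\e)\rrbracket^d$ and $M_d(\e)\asymp_d\e^{-1}\asymp_d V$ (the latter only up to the constant $K_d$ once~\eqref{cntrainteV} is invoked), we have $s_{\m}=\m\bm\cdot\und\ll_d V$, hence $s_{\m}^{\kappa}\ll_d V^{\kappa}$.

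The second step is to bound the weighted sum $\sum_{\m} r_d(\m)\cdot s_{\m}^{\kappa}$. Crudely, $\sum_{\m\in\llbracket 0, M_d(\e)\rrbracket^d\setminus\{\bm 0\}} r_d(\m)\ll_d\left|\log\e\right|^d$, exactly as in the computation after Theorem~\ref{mainthm}; combining this with $s_{\m}^{\kappa}\ll_d V^{\kappa}\ll_d\e^{-\kappa}$ yields
\begin{equation*}
\sum_{\m\in\llbracket 0, M_d(\e)\rrbracket^d\setminus\{\bm 0\}} r_d(\m)\cdot\left|\sum_{k=V+1}^{2V}\ex\left(\m\bm\cdot\left(k\q-a_k\und\right)\right)\right|\,\ll_{\kappa,\lambda,d}\, V^{\lambda}\cdot\e^{-\kappa}\cdot\left|\log\e\right|^d.
\end{equation*}
Plugging this into~\eqref{condistricposi}, the right--hand side is strictly positive as soon as $V\cdot\e^d\gg_{\kappa,\lambda,d} V^{\lambda}\cdot\e^{-\kappa}\cdot\left|\log\e\right|^d$, i.e. as soon as $V^{1-\lambda}\gg_{\kappa,\lambda,d}\e^{-d-\kappa}\cdot\left|\log\e\right|^d$, which (using $\lambda<1$ so that $1-\lambda>0$) amounts to
\begin{equation*}
V\,\gg_{\kappa,\lambda,d}\,\e^{-(d+\kappa)/(1-\lambda)}\cdot\left|\log\e\right|^{d/(1-\lambda)}.
\end{equation*}
Since $\left|\log\e\right|^{d/(1-\lambda)}=O_{\kappa,\lambda,\eta,d}\!\left(\e^{-\eta(1-\lambda)/(1-\lambda)}\right)=O_{\kappa,\lambda,\eta,d}\!\left(\e^{-\eta}\right)$ for any $\eta>0$ (absorbing the logarithmic loss into an arbitrarily small power of $\e^{-1}$), such a $V$ can be taken to satisfy $V=O_{\kappa,\lambda,\eta,d}\!\left(\e^{-(d+\kappa+\eta')/(1-\lambda)}\right)$ for a suitable $\eta'>0$ proportional to $\eta$; relabelling $\eta'$ as $\eta$ gives the claimed bound $\V(\e)=O_{\kappa,\lambda,\eta,d}\!\left(\e^{-(d+\kappa+\eta)/(1-\lambda)}\right)$. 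One also checks that the chosen $V$ is compatible with the standing assumption~\eqref{cntrainteV}, i.e. $V\ge\e^{-1}$, which holds since $(d+\kappa)/(1-\lambda)\ge d\ge 1$; and the finite--density property of the resulting forest is exactly the structural conclusion of section~\ref{sec1}, independent of the choice of $(a_k)_{k\ge 0}$.

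The only genuinely delicate point is bookkeeping the dependence of the constants and making sure the reduction of section~\ref{sec1} (restriction to $\bv\in\mathcal S_\infty^d(1)$, to the half--space $\{x_1\ge 0\}$, and the passage from $I_V$ to $I_{2V}$) and of section~\ref{sec3} (the Beurling--Selberg majorant $\phi_\e$ with $M=M_d(\e)$ as in~\eqref{defM}) go through verbatim for an arbitrary sequence $(a_k)_{k\ge 0}$: none of these steps used any property of the sequence, so they transfer without change, and the admissibility hypothesis~\eqref{exppair} is precisely what replaces the role played by Theorem~\ref{mainthm} in the unconditional argument. I do not expect any serious obstacle beyond correctly tracking that $s_{\m}\ll_d V$ uniformly over the relevant range of $\m$ and that the logarithmic factor is harmless, both of which are routine.
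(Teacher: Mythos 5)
Your overall strategy matches the paper's: run the Fourier–analytic reduction of sections~\ref{sec1}--\ref{sec3} unchanged, replace the $V^{1/2+\eta}$ bound from Theorem~\ref{mainthm} by the hypothesis $|\sum\ex(kp_{\m,\q}-a_k s_{\m})|\ll s_{\m}^{\kappa}V^{\lambda}$, estimate $\sum_{\m}r_d(\m)s_{\m}^{\kappa}$, and absorb logarithms into $\eta$. This is exactly the paper's argument, modulo one slip and one cruder (but harmless) estimate.

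The slip: you write $M_d(\e)\asymp_d\e^{-1}\asymp_d V$ and then $s_{\m}^{\kappa}\ll_d V^{\kappa}\ll_d\e^{-\kappa}$. Both assertions go the wrong way. The standing assumption~\eqref{cntrainteV} gives $V\ge\e^{-1}$, and in the end $V$ is taken of order $\e^{-(d+\kappa+\eta)/(1-\lambda)}$, which is far larger than $\e^{-1}$; so $V\not\asymp\e^{-1}$ and $V^{\kappa}\gg\e^{-\kappa}$, not $\ll$. The chain as written would collapse the whole bound. What you actually need, and what is true, is $s_{\m}\ll_d M_d(\e)\ll_d\e^{-1}$ (directly from $\m\in\llbracket 0,M_d(\e)\rrbracket^d$ and~\eqref{defM}), whence $s_{\m}^{\kappa}\ll_{d}\e^{-\kappa}$. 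Passing through $V$ is both unnecessary and incorrect; bound $s_{\m}$ by $M_d(\e)$, not by $V$.

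A smaller remark on method: you bound $\sum_{\m}r_d(\m)s_{\m}^{\kappa}$ by $(\max_{\m}s_{\m}^{\kappa})\cdot\sum_{\m}r_d(\m)\ll\e^{-\kappa}|\log\e|^{d}$, whereas the paper splits according to the index $i$ with $\|\m\|_{\infty}=m_i$ and obtains the slightly sharper $M_d(\e)^{\kappa}(\log M_d(\e))^{d-1}$. Since both logarithmic factors are absorbed into $\e^{-\eta}$, the final exponent is the same. With the $s_{\m}\ll M_d(\e)$ fix in place, your proof is correct and essentially the same as the paper's.
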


\begin{proof}
As in section~\ref{sec3}, we are looking for a value of $V\ge 1$ depending only on the sequence $\left(a_k\right)_{k\ge 0}$ such that the quantity $L_{\e}(V)$ defined in~\eqref{defLepsi} is strictly positive. From inequality~\eqref{condistricposi}, this happens as soon as  $$V\cdot\e^d \, \gg_d\, \sum_{\m\in \llbracket 0\: , \:  M_d(\e))\rrbracket^d\backslash\left\{\bm{0}\right\}} r_d(\m)\cdot\left| \sum_{k=V+1}^{2V}\ex\left(\m\bm{\cdot}\left(k\q-a_k\und\right)\right)\right|.$$
Under assumption~\eqref{exppair}, this happens whenever  
\begin{equation}\label{estimauxi}
V\cdot\e^d \, \gg_d\, V^{\lambda}\cdot \left(\sum_{\m\in \llbracket 0\: , \:  M_d(\e))\rrbracket^d\backslash\left\{\bm{0}\right\}} r_d(\m)\cdot s_{\m}^{\kappa}\right).
\end{equation}

From the definitions of the quantities $r_d(\m)$ and $s_{\m}$ in~\eqref{defrd} and~\eqref{defpqmsm} respectively, it should be clear that 

\begin{align*}
 \sum_{\m\in \llbracket 0\: , \:  M_d(\e))\rrbracket^d\backslash\left\{\bm{0}\right\}} r_d(\m) \cdot s_{\m}^{\kappa}\, &\ll_d\, \sum_{\m\in \llbracket 1\: , \:  M_d(\e)\rrbracket^d\backslash\left\{\bm{0}\right\}} \frac{\left(\sum_{i=1}^{d}m_1\right)^{\kappa}}{\prod_{i=1}^{d} m_i}\\
&\ll_d\, \sum_{\underset{\left\|\m \right\|_\infty = m_1}{\m \in \llbracket 1, \, M_d(\e)\rrbracket^d}} \frac{m_1^{\kappa}}{\prod_{i=1}^{d} m_i}\\
&\ll_d\, \left(\sum_{m_1=1}^{M_d(\e)} \frac{1}{m_1^{1-\kappa}}\right)\cdot\left( \sum_{m=1}^{M_d(\e)}\frac{1}{m}\right)^{d-1}\\
&\ll_{d, \kappa}\, M_d(\e)^\kappa\cdot\left(\log M_d(\e) \right)^{d-1}.\\
\end{align*}

Thus, for any given $\eta>0$, 

\begin{align*}
 \sum_{\m\in \llbracket 0\: , \:  M_d(\e))\rrbracket^d\backslash\left\{\bm{0}\right\}} r_d(\m) \cdot s_{\m}^{\kappa}\, \ll_{d, \kappa, \eta} \, M_d(\e)^{\kappa +\eta} \, \underset{\eqref{defM}}{\ll_{d, \kappa, \eta}} \, \e^{-(\kappa+\eta)}.
\end{align*}

From~\eqref{estimauxi}, this implies that $L_\e(V)>0$ as soon as $$V^{1-\lambda}\, \gg_{d, \kappa, \eta}\, \e^{-(d+\kappa+\eta)},$$ hence the result.
\end{proof}

In order to find ``good'' exponent pairs, one must first define the sequence $\left(a_k\right)_{k\ge 0}$. Inspired by Van der Corput's method of exponential sums which will be outlined below, it is natural to ask that this sequence should be the leading term in the argument of the exponential appearing in~\eqref{exppair}. However, one should then expect that the faster the sequence $\left(a_k\right)_{k\ge 0}$ grows, the less accurate the estimate of~\eqref{exppair} with this method. Hence, it is natural to set for instance 
\begin{equation}\label{defak}
a_k\, :=\, k(\log k -1)\quad \textrm{for all }\quad k\ge 1
\end{equation} 
and $a_0:=0$.

For the sake of simplicity of notation, let from now on 
\begin{equation*}
p\,:=\,p_{\m, \q} \quad \textrm{ and }\quad s\,:=\,s_{\m}\,\ge\, 1.
\end{equation*} 
Note that from~\eqref{defpqmsm}, $s\ll_d \left\|\m\right\|_\infty\ll_d M_d(\e)$, hence, from~\eqref{cntrainteV} and~\eqref{defM}, 
\begin{equation}\label{sinfV}
s\, \ll_d \, V.
\end{equation} 

 Let $$f(k)\, :=\, pk - s k\left(\log k -1 \right)$$ for all $k\ge 1$. Thus, we are reduced to estimate the exponential sum $\sum_{k=V+1}^{2V}\ex\left(f(k)\right)$. In view of the type of estimate we are looking for (cf.~\eqref{exppair}), we may without loss of generality work with the modified exponential sum 
\begin{equation}\label{defexpf(k)}
\sum_{k=V}^{2V}\ex\left(f(k)\right)
\end{equation} 
($V\ge 1$ ), which will slightly simplify the reasoning below. We first apply to this sum a classical inversion step which is a consequence of the Poisson formula. 

\begin{lem}(\cite[Lemma~3.6]{exppairs})\label{invstep}
Assume that $f$ is a four times continuously differentiable function on an interval $[a, \, b]\subset [V, \, 2V]$ such that $f^{(2)}<0$. Set $\alpha := f'(b)$ and $\beta:=f'(a)$ and assume furthermore that there exists a constant $F>0$ such that the following estimates hold on $[a, \, b]$~: $$\left|f^{(2)} \right|\, \asymp\, FV^{-2}, \quad \left|f^{(3)}\right|\, \ll\, FV^{-3} \quad \textrm{ and }\quad \left|f^{(4)}\right|\, \ll\, FV^{-5}.$$
For any integer $\nu\in [\alpha,\, \beta]$, define the real number $x_\nu\in [a,\, b]$ by $f'(x_\nu)=\nu$.

Then, the following equation holds~: $$\sum_{a\le k \le b}\ex\left(f(k)\right)\, = \, \sum_{\alpha\le \nu \le \beta}\frac{\ex\left(-f(x_\nu)+\nu x_\nu - 1/8\right)}{\left|f^{(2)}(x_\nu)\right|^{1/2}}\, + \, O\left( \log\left(2+FV^{-1} \right)+F^{-1/2}V\right).$$
\end{lem}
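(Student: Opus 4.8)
This is precisely the \emph{B-process} (the Poisson-summation plus stationary-phase step) of van der Corput's method of exponential sums, so the plan follows the classical route. First I would pass from the sum $\sum_{a\le k\le b}\ex(f(k))$ to a sum of exponential integrals by a truncated Poisson summation formula: after replacing $\ex(f(\,\cdot\,))$ by a suitable test function agreeing with it on $[a,b]$ (or cutting it off smoothly just outside), Poisson gives $\sum_{k}\ex(f(k))=\sum_{\nu\in\Z}I_\nu$ with $I_\nu:=\int_a^b\ex(f(t)-\nu t)\,\textrm{d}t$, and the standard truncation device restricts the outer sum to a bounded neighbourhood of the interval $[\alpha,\beta]$ traversed by $f'$ on $[a,b]$, at the cost of an error $O(\log(2+(\beta-\alpha)))$. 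Since $f^{(2)}<0$ the function $f'$ is a strictly decreasing bijection of $[a,b]$ onto $[\alpha,\beta]$, and $\beta-\alpha=f'(a)-f'(b)=\int_a^b|f^{(2)}(t)|\,\textrm{d}t\asymp FV^{-2}(b-a)\le FV^{-1}$ because $b-a\le V$; this already accounts for the $O(\log(2+FV^{-1}))$ contribution to the error.

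Next I would evaluate each integral $I_\nu$. For $\nu$ lying outside a fixed neighbourhood of $[\alpha,\beta]$, the phase $f(t)-\nu t$ has derivative $f'(t)-\nu$ of constant sign and bounded away from $0$ by $|\nu-\alpha|$ or $|\nu-\beta|$; one application of integration by parts (the first-derivative test), with the second derivative controlled by the hypothesis $|f^{(2)}|\asymp FV^{-2}$, gives $I_\nu\ll|\nu-\alpha|^{-1}$ (resp.\ $|\nu-\beta|^{-1}$), and summing these over the truncation range contributes a further $O(\log(2+FV^{-1}))$. For $\nu\in[\alpha,\beta]$ the phase has a unique stationary point $x_\nu\in[a,b]$, namely the solution of $f'(x_\nu)=\nu$, and the second-derivative (saddle-point) asymptotic for exponential integrals --- whose error I would control using the three bounds on $f^{(2)},f^{(3)},f^{(4)}$ in the hypotheses --- yields
$$I_\nu=\frac{\ex\big(-f(x_\nu)+\nu x_\nu-1/8\big)}{\bigl|f^{(2)}(x_\nu)\bigr|^{1/2}}+E_\nu,$$
the leading phase and the factor $\ex(-1/8)$ arising from the Fresnel evaluation $\int_{\R}\ex\bigl(\tfrac12\lambda u^2\bigr)\,\textrm{d}u=|\lambda|^{-1/2}\ex(\operatorname{sgn}(\lambda)/8)$ applied with $\lambda=f^{(2)}(x_\nu)<0$.

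It then remains to collect the error terms. A single main term has size $\bigl|f^{(2)}(x_\nu)\bigr|^{-1/2}\asymp F^{-1/2}V$; the finitely many \emph{transition} values of $\nu$ --- those at the ends of the range $[\alpha,\beta]$ and those for which $x_\nu$ falls within $O(F^{-1/2}V)$ of an endpoint $a$ or $b$, where neither the first-derivative test nor the clean saddle-point formula applies --- must be handled by hand and each costs $O(F^{-1/2}V)$, while the accumulated remainders $\sum_\nu|E_\nu|$ are likewise $\ll F^{-1/2}V$ thanks to the derivative bounds. Adding these to the two $O(\log(2+FV^{-1}))$ contributions produced in the first two steps gives the asserted error $O(\log(2+FV^{-1})+F^{-1/2}V)$.

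The hard part is the third step: proving the saddle-point asymptotic for $I_\nu$ with an \emph{explicit} remainder phrased in terms of $F$ and $V$, and treating the transition range in which the stationary point approaches an endpoint of $[a,b]$. This is the genuine technical core of the B-process --- and the reason four-times differentiability of $f$ together with the precise comparison of $f^{(2)},f^{(3)},f^{(4)}$ with $FV^{-2},FV^{-3},FV^{-5}$ is imposed --- so in practice I would simply quote the corresponding estimate from the cited reference rather than reprove it from scratch.
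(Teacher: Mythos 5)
The paper does not prove this lemma at all: it is quoted verbatim as \cite[Lemma~3.6]{exppairs} from Graham and Kolesnik, so there is no in-paper argument against which to compare your proposal. What you have written is a faithful high-level sketch of the classical B-process underlying that result --- truncated Poisson summation, the first-derivative test for $\nu$ off the range $[\alpha,\beta]$, stationary phase at $x_\nu$ for $\nu$ in range with the Fresnel factor $\ex(\operatorname{sgn}(f^{(2)})/8)=\ex(-1/8)$, and bookkeeping of the transition terms of size $F^{-1/2}V$ --- and the bound $\beta-\alpha\ll FV^{-2}(b-a)\le FV^{-1}$ you use to control the logarithmic error is exactly right. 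Your explicit deferral to the cited reference for the quantitative stationary-phase remainder (the reason the hypotheses impose comparisons on $f^{(2)},f^{(3)},f^{(4)}$ down to four derivatives) is entirely consistent with the paper's own treatment, which simply cites the lemma without reproving it, so there is nothing to fault here.
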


It is easily seen that Lemma~\ref{invstep} applies in our case with the choices of the parameters $$a=V, \; \; b=2V,\, \; \; F=sV, \; \; x_\nu = \exp\left(\frac{p-\nu}{s}\right), \; \; \alpha= p-s\cdot \log(2V) \; \; \textrm{ and }\;\; \beta=p-s\cdot \log(V).$$

Upon noting that $$-f(x_\nu)+\nu x_\nu \,= \, -s\cdot \exp\left(\frac{p-\nu}{s}\right),$$ Lemma~\ref{invstep} yields the inequalities~:

\begin{align}
\sum_{k=V}^{2V}\ex\left(f(k)\right)\, &\ll\, \sum_{p-s\cdot\log(2V)\le \nu \le p-s\cdot\log(V)} \frac{\ex\left(-s\cdot \exp\left(\frac{p-\nu}{s} \right)\right)}{\left|f^{(2)}\left(x_\nu\right)\right|^{1/2}} \,+\, \log(2+s)\, + \, \sqrt{\frac{V}{s}} \nonumber \\ 
&\underset{\eqref{sinfV}}{\ll_d}\, \sum_{p-s\cdot\log(2V)\le \nu \le p-s\cdot\log(V)} \frac{\ex\left(-s\cdot \exp\left(\frac{p-\nu}{s} \right)\right)}{\left|f^{(2)}\left(x_\nu\right)\right|^{1/2}} \, + \, \sqrt{V}.\label{expsuminv}
\end{align}

Since $$\left|f^{(2)}\left(x_\nu\right) \right| = \left| s\cdot \exp\left(\frac{\nu-p}{s} \right)\right| \asymp \frac{s}{V}$$ for all $\nu \in [p-s\cdot\log(2V), \, p-s\cdot\log(V)]$, one obtains by partial summation  in the sum on the right--hand side of~\eqref{expsuminv} that

\begin{equation}\label{expsumapresreduc}
\sum_{k=V}^{2V}\ex\left(f(k)\right)\, \ll_d\, \sqrt{V}\, + \, \sqrt{\frac{V}{s}}\max_{p-s\cdot\log(2V)\le j \le p-s\cdot\log(V)} \left|\sum_{p-s\cdot\log(2V)\le \nu \le j}  \ex\left(-s\cdot\exp\left(\frac{\nu-p}{s} \right)\right) \right|.
\end{equation}

In order to estimate the maximum appearing on the right--hand side of this inequality, define 
\begin{equation}\label{defsig}
\sigma\in [0, \, 1/2)
\end{equation}
as the real number such that $s\cdot\log V -p +\sigma\in\Z$. Set $$\gamma\,:=\, -j-s\cdot\log V +p+\sigma \quad \textrm{and}\quad \mu\,:=\, -\nu-s\cdot\log V + p + \sigma.$$
Then, 
\begin{align}
\max_{p-s\cdot\log(2V)\le j \le p-s\cdot\log(V)} & \left|\sum_{p-s\cdot\log(2V)\le \nu \le j}  \ex\left(-s\cdot\exp\left(\frac{\nu-p}{s} \right)\right) \right|\nonumber\\
& \ll\, \max_{1\le \gamma\le s\cdot \log 2}\left| \sum_{\gamma\le \mu\le s\cdot\log 2}e\left(-sV\cdot\exp\left(-\frac{\sigma}{s} \right)\cdot\exp\left(\frac{\mu}{s}\right)\right)\right|. \label{oubli}
\end{align}

Define
\begin{align}\label{defkgamma}
k_\gamma\, :=\, \left\lfloor\frac{1}{\log 2}\cdotp \log\left( \frac{s\cdot \log 2}{\gamma}\right) \right\rfloor\, \ll\, \log s
\end{align}
in such a way that 
\begin{align}
 \max_{1\le \gamma\le s\cdot \log 2}&\left| \sum_{\gamma\le \mu\le s\cdot\log 2}\ex\left(-sV\cdot\exp\left(-\frac{\sigma}{s} \right)\cdot\exp\left(\frac{\mu}{s}\right)\right)\right|\nonumber \\
&\le\,  \max_{1\le \gamma\le s\cdot \log 2}\,\sum_{l=1}^{k_\gamma}\left| \sum_{\frac{s\cdot \log 2}{2^l}\le \mu\le \frac{s\cdot\log 2}{2^{l-1}}}\ex\left(-sV\cdot\exp\left(-\frac{\sigma}{s} \right)\cdot\exp\left(\frac{\mu}{s}\right)\right)\right| \nonumber \\
&\underset{\eqref{defkgamma}}{\ll}\, \log s\cdot \max_{\underset{1\le l \le k_\gamma}{1\le \gamma\le s\cdot \log 2}} \left| \sum_{\frac{s\cdot \log 2}{2^l}\le \mu\le \frac{s\cdot\log 2}{2^{l-1}}}\ex\left(-sV\cdot\exp\left(-\frac{\sigma}{s} \right)\cdot\exp\left(\frac{\mu}{s}\right)\right)\right|. \label{inegdeplus}
\end{align}

From~\eqref{defsig}, it should be clear that for any $\gamma\in \llbracket 1, \, s\,\cdot \log 2\rrbracket$, $l\in\llbracket 1, \, k_\gamma \rrbracket$ and $r\ge 1$, $$\left|\frac{\textrm{d}^r}{\textrm{d}\mu^r}\left(-sV\cdot\exp\left(-\frac{\sigma}{s} \right)\cdot\exp\left(\frac{\mu}{s}\right) \right) \right|\, \asymp\, \frac{V}{s^{r-1}}$$ uniformly in $\mu\in [s\cdot\log 2/2^l, \, s\cdot\log 2/2^{l-1}]$, where the implicit constant is absolute. One may therefore apply the theory of exponent pairs in order to estimate the exponential sum appearing on the right--hand side of~\eqref{inegdeplus} (see, e.g., \cite[p.44]{ivic} for a justification of this claim). For a detailed exposition on this topic, the reader is referred to~\cite{exppairs}. We recall hereafter the facts which will be useful to our purpose.

 The inversion step (i.e.~Lemma~\ref{invstep}) has reversed the role played by the integers $s$ and $V$ in the initial sum~\eqref{defexpf(k)}~: in~\eqref{inegdeplus}, the integer $s$ now determines the range of the sum and $V$ is considered as a parameter. As a consequence, an admissible exponent pair for the latter sum takes the form of a pair $\left(\kappa', \, \lambda' \right)\in [0, 1]^2$ satisfying the inequality
\begin{align}\label{newexppairs}
\left| \sum_{\frac{s\cdot \log 2}{2^l}\le \mu\le \frac{s\cdot\log 2}{2^{l-1}}}\ex\left(-sV\cdot\exp\left(-\frac{\sigma}{s} \right)\cdot\exp\left(\frac{\mu}{s}\right)\right)\right|\, \ll\, V^{\kappa'}\cdot s^{\lambda'}.
\end{align}

Thus, if $\left(\kappa', \, \lambda' \right)$ is an admissible exponent pair in~\eqref{newexppairs} with $\lambda'\ge 1/2$, in view of~\eqref{expsumapresreduc} and~\eqref{oubli}, 
\begin{equation} \label{relexppair}
\left(\kappa, \, \lambda \right):=\left(\lambda'-1/2, \, \kappa'+1/2 \right)
\end{equation} 
is an admissible pair for~\eqref{defexpf(k)} (in~\eqref{expsumapresreduc}, the second term of the sum on the right--hand side is bounded above by a constant multiple of $V^\lambda \cdot s^\kappa$ which dominates the term $\sqrt{V}$ because $\kappa\ge 0$ and $\lambda\ge 1/2$).

Let $\eta>0$. According to the exponent pair conjecture, $\left(\kappa', \, \lambda' \right)=\left(\eta, 1/2+\eta\right)$ is an admissible exponent pair in~\eqref{newexppairs}, in which case $\left(\kappa, \, \lambda\right)=\left(\eta, 1/2+\eta\right)$ is also an admissible exponent pair in~\eqref{defexpf(k)}. From Lemma~\ref{lemexppairvisibilite}, this implies that the visibility function $\V$ in the forest constructed from the sequence defined in~\eqref{defak} is such that $$\V(\e)\, = \, O_{\eta, d}\left(\e^{-2d-\eta} \right).$$ This is essentially the result stated in Theorem~\ref{mainthmintro}.

In order to obtain an unconditional result, one may successively repeat, following Van der Corput's idea, a so--called $A$--process and a so--called $B$--process when treating the exponential sum in~\eqref{newexppairs}. The $A$--process consists of an application of the Weyl--Van der Corput Lemma (cf.~\cite[Lemma~2.5 ]{exppairs}) and transforms an exponent pair $(k, \, l)$ into a pair $(k/(2k+2), \, (k+l+1)/(2k+2))$. The $B$--process is essentially the inversion step presented in Lemma~\ref{invstep} above and transforms an exponent pair $(k, \, l)$ into a pair $(l-1/2, \, k+1/2)$. As the latter step is involutive (i.e.~$B^2$ gives back the initial sum), any application of Van der Corput's method takes the form 
\begin{equation}\label{vdcalogo}
(B)A^{r_1}BA^{r_2}\dots BA^{r_n}B \quad (r_i\ge 0, \, 1\le i \le n),
\end{equation} 
where $A^r$ means that the $A$--process is applied $r\ge 0$ times. Here, one must start with any given pair of exponents on the right--hand side of~\eqref{vdcalogo} (e.g., the trivial one $(0,1)$) to obtain a new pair of exponents after the successive applications of the $A$ and $B$ processes as indicated in~\eqref{vdcalogo}. The choice of the parameters $r_i$ represents a difficult problem of optimisation which was rigorously formalised by Van der Corput in the twenties and then simplified by Philips (1933). More recently, Graham and Kolesnik~\cite[Chap.~5]{exppairs} proposed an algorithm to this end.

In our case, in view of Lemma~\ref{lemexppairvisibilite} and of~\eqref{relexppair}, the goal is to find optimal exponent pairs $\left(\kappa', \, \lambda' \right)$ so as to minimise the quantity 
\begin{equation}\label{qteamin}
\frac{\lambda' - 1/2+d}{1/2-\kappa'}\,= \, \frac{\kappa+d}{1-\lambda}\cdotp
\end{equation} 
The algorithm provided in~\cite[Chap.~5]{exppairs} yields that the pair of exponents 
\begin{equation}\label{pairretenue}
\left(\kappa', \, \lambda' \right)\, = \,(1/42, \, 25/28)
\end{equation}
 resulting from the operations $A^3BA^2B (0, \, 1)$ is an admissible pair for~\eqref{newexppairs} which approaches the infimum of~\eqref{qteamin}. As the algorithm is infinite and as further iterations do not seem to provide substantial improvement (the value of~\eqref{qteamin} decreases by less than 0.001 when $d=1$ for the next 20 iterations), we prefer to state below the unconditional result obtained with the pair~\eqref{pairretenue}. The interested reader may try to improve on this bound by delving further into the algorithm detailed in~\cite[Chap.5]{exppairs}.  

Thus, from Lemma~\ref{lemexppairvisibilite}, the visibility function $\V$ in the forest under consideration satisfies the estimate $\V(\e)\, =\, O_{\eta, d}\left(\e^{-(d+11/28)/(10/21)-\eta}\right)$ for any $\eta>0$. Note that in the plane (i.e.~when $d=1$), this gives a visibility function such that $\V(\e)\, =\, O_{\eta}\left(\e^{-2.925-\eta}\right)$ instead of $\V(\e)\, =\, O_{\eta}\left(\e^{-2-\eta}\right)$ ($\eta>0$) under the exponent pair conjecture. This is still much better than the bound $O\left(\e^{-4}\right)$ obtained by Y.~Peres in~\cite[\S 2.3]{bishop}.

The results proved in this section are summarised in the theorem below.

\begin{thm}\label{thmdernier}
Let $\eta>0$ and let $\left(a_k\right)_{k\ge 0}$ be the sequence defined in~\eqref{defak}.

Then, the visibility function $\V$ for the forest $\F$ constructed in section~\ref{sec1} from this sequence satisfies the following estimates~:
\begin{itemize}
\item under the exponent pair conjecture, $$\V(\e)\, = \, O_{\eta, d}\left(\ \e^{-2d-\eta}\right) \,;$$
\item unconditionally,  $$\V(\e)\, = \, O_{\eta, d}\left(\ \e^{-2.1\cdot d-0.825-\eta}\right).$$
\end{itemize}
\end{thm}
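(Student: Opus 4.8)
The plan is to assemble Theorem~\ref{thmdernier} essentially by collecting the implications already proved in this section and feeding them into Lemma~\ref{lemexppairvisibilite}. The only genuine work is to justify, cleanly and in one place, that the exponential sum $\sum_{k=V+1}^{2V}\ex\left(\m\bm{\cdot}\left(k\q-a_k\und\right)\right)$ associated with the sequence~\eqref{defak} admits the admissible pairs claimed, so that Lemma~\ref{lemexppairvisibilite} can be quoted with $(\kappa,\lambda)=(\eta,1/2+\eta)$ conditionally and with $(\kappa,\lambda)=(1/42+1/2-1/2,\,25/28+1/2)=\ldots$ — more precisely with the pair obtained from~\eqref{relexppair} and~\eqref{pairretenue} — unconditionally.

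First I would recall the reduction carried out above: writing $p:=p_{\m,\q}=\m\bm{\cdot}\q$ and $s:=s_{\m}=\m\bm{\cdot}\und\ge 1$ as in~\eqref{defpqmsm}, with $s\ll_d V$ by~\eqref{sinfV}, the sum in~\eqref{expsumbase} coincides (up to the harmless passage to~\eqref{defexpf(k)}) with $\sum_{k=V}^{2V}\ex(f(k))$ where $f(k)=pk-sk(\log k-1)$. The chain~\eqref{expsuminv}--\eqref{inegdeplus} reduces its estimation, at the cost of a factor $\log s\ll_d\log V$ and an additive $\sqrt V$, to bounding the dyadic pieces in~\eqref{newexppairs}, for which the derivative bound $\bigl|\tfrac{\textrm{d}^r}{\textrm{d}\mu^r}\bigl(-sV\exp(-\sigma/s)\exp(\mu/s)\bigr)\bigr|\asymp V/s^{r-1}$ makes the theory of exponent pairs applicable with $V$ as the "size" parameter and $s$ as the length of summation. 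Hence any admissible exponent pair $(\kappa',\lambda')$ with $\lambda'\ge 1/2$ in~\eqref{newexppairs} yields, via~\eqref{relexppair}, an admissible pair $(\kappa,\lambda)=(\lambda'-1/2,\kappa'+1/2)$ for~\eqref{exppair} (absorbing the $\log V$ into the $\eta$).

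Then the two bullet points follow immediately from Lemma~\ref{lemexppairvisibilite}. For the conditional statement, the exponent pair conjecture supplies $(\kappa',\lambda')=(\eta,1/2+\eta)$, whence $(\kappa,\lambda)=(\eta,1/2+\eta)$ and Lemma~\ref{lemexppairvisibilite} gives $\V(\e)=O_{\eta,d}\bigl(\e^{-(d+\eta)/(1/2-\eta)}\bigr)=O_{\eta,d}\bigl(\e^{-2d-\eta'}\bigr)$ after relabelling $\eta$. For the unconditional statement, the pair $(\kappa',\lambda')=(1/42,25/28)$ coming from $A^3BA^2B(0,1)$ — admissible in~\eqref{newexppairs} since its $\lambda'=25/28\ge1/2$ — gives through~\eqref{relexppair} the pair $(\kappa,\lambda)=(25/28-1/2,\,1/42+1/2)=(11/28,\,22/21)$; wait, $1/42+1/2=22/42=11/21$, so $(\kappa,\lambda)=(11/28,\,11/21)$ with $\lambda=11/21<1$ as required. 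Lemma~\ref{lemexppairvisibilite} then yields $\V(\e)=O_{\kappa,\lambda,\eta,d}\bigl(\e^{-(d+11/28+\eta)/(10/21)}\bigr)=O_{\eta,d}\bigl(\e^{-2.1d-0.825-\eta}\bigr)$, since $(21/10)(d+11/28)=2.1d+2.1\cdot(11/28)=2.1d+0.825$.

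The main obstacle — really the only place requiring care rather than bookkeeping — is the verification that the theory of exponent pairs legitimately applies to the sum in~\eqref{newexppairs}: one must check that the phase $\mu\mapsto -sV\exp(-\sigma/s)\exp(\mu/s)$ satisfies the standard derivative hypotheses uniformly over the dyadic ranges $[s\log 2/2^l,\,s\log 2/2^{l-1}]$, that these ranges are long enough (of length $\gg s/2^l$, and $k_\gamma\ll\log s$ so there are $O(\log s)$ of them) for the asymptotic machinery to bite, and that the parameter $\sigma\in[0,1/2)$ defined through~\eqref{defsig} does not interfere. One also has to confirm that running the Graham–Kolesnik algorithm on the optimisation of~\eqref{qteamin} indeed returns~\eqref{pairretenue} as the (near-)optimal choice for the relevant value of $d$; since the algorithm is effective this is a finite check, and the text already records that further iterations change the exponent by less than $0.001$ when $d=1$, so quoting~\cite[Chap.~5]{exppairs} suffices. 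Everything else is the formal substitution into Lemma~\ref{lemexppairvisibilite} described above.
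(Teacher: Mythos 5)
Your proposal is correct and follows the paper's own proof exactly: Theorem~\ref{thmdernier} is really a summary statement, obtained by feeding the two exponent pairs $(\kappa',\lambda')=(\eta,1/2+\eta)$ (conditional) and $(\kappa',\lambda')=(1/42,25/28)=A^3BA^2B(0,1)$ (unconditional) through the $B$--process relation~\eqref{relexppair} and then into Lemma~\ref{lemexppairvisibilite}, with the inversion step and dyadic decomposition of~\eqref{expsuminv}--\eqref{inegdeplus} justifying the applicability of exponent pairs to~\eqref{newexppairs}. Your arithmetic, after the self-correction, matches the paper's: $(\kappa,\lambda)=(11/28,11/21)$, $1-\lambda=10/21$, and $(21/10)(d+11/28)=2.1d+0.825$.
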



\begin{thebibliography}{99}

\bibitem{allen} T.T.~{Allen}.
\newblock {P\'olya's orchard problem.}
\newblock{\em Am.~Math.~Mon.}, 93(2):98--104, 1986.

\bibitem{bachkhmapla} P.~{Bachurin}, K.~{Khanin}, J.~{Marklof} and A.~{Plakhov}.
\newblock {Perfect retroreflectors and billiard dynamics.}
\newblock{\em J.~Mod.~Dyn.}, 5(1):33--48, 2011.

\bibitem{bartmontvaal} J.T.~{Barton},  H.L.~{Montgomery} and J.D.~{Vaaler}.
\newblock {Note on a Diophantine inequality in several variables.}
\newblock{\em Proc.~Am.~Math.~Soc.}, 129(2):337--345, 2001.

\bibitem{bishop} C.J.~{Bishop}.
\newblock {A set containing rectifiable arcs QC-locally but not QC-globally.}
\newblock{\em Pure and Applied Mathematics Quarterly}, 1(7):121--138, 2011.

\bibitem{chamizo} F.~{Chamizo}.
\newblock {Non--Euclidean visibility problems.}
\newblock{\em Proc.~Indian Acad.~Sci., Math.~Sci.}, 116(2):147--1360, 2006.

\bibitem{chen1} Y.~{Chen}.
\newblock {View--obstruction problems and Kronecker's theorem.}
\newblock{\em Proc.~Amer.~Math.~Soc.}, 123(11):3279--3284, 1995.

\bibitem{chen2} Y.~{Chen}.
\newblock {The view-obstruction problem for 4-dimensional spheres.}
\newblock{\em Am.~J~ Math.}, 116(6):1381--1419, 1994.

\bibitem{cochrane} T.~{Cochrane}.
\newblock {Trigonometric approximation and uniform distribution modulo one.}
\newblock{\em Proc.~Am.~Math.~Soc.}, 103(3):695--702, 1988.

\bibitem{cusick} T.W.~{Cusick}.
\newblock {View-obstruction problems.}
\newblock{\em Aequationes Math.}, 9(2--3):165--170, 1973.

\bibitem{drmticgy} M.~{Drmota} and R.F.~{Tichy}.
\newblock {\em Sequences, discrepancies and applications,}
\newblock volume 1651 of {\em Lecture Notes in Mathematics}. Springer--Verlag, Berlin, 1997 .

\bibitem{survey} V.C.~{Dumir} and R.J.~{Hans-Gill}.
\newblock {View-obstruction problems.}
\newblock In {\em Number theory and discrete geometry. Proceedings of the international conference in honour of Professor R. P. Bambah, Chandigarh, India, November 30--December 3, 2005}, p.119--127, Mysore~: Ramanujan Mathematical Society, 2008.

\bibitem{ghosh} S.K.~{Ghosh}.
\newblock {\em Visibility algorithms in the plane.}
\newblock{Cambridge~: Cambridge University Press}, 2007.

\bibitem{exppairs} S.W.~{Graham} and G.~{ Kolesnik}.
\newblock {\em Van der Corput's method of exponential sums.}
\newblock  {\em London Mathematical Society Lecture Note Series}, 126. Cambridge: Cambridge University Press, 1991.

\bibitem{glyharm} G.~{Harman}.
\newblock {Small fractional parts of additive forms.}
\newblock  {\em Philos.~Trans.~R.~Soc.~Lond., Ser. A}, 345(1676):327--338, 1993.

\bibitem{glybis} G.~{Harman}.
\newblock {\em Metric Number Theory,}
\newblock volume 18 of {\em London Mathematical Society Monographs. New Series.} The Clarendon Press, Oxford University Press, New York, 1998.

\bibitem{haykelba} A.~{Haynes}, M.~{Kelly} and B.~{Weiss} .
\newblock {Equivalence relations on separated nets arising from linear toral flows.}
\newblock{\em Proc.~Lond.~Math.~Soc.~(3)}, 109(5):1203--1228, 2014.

\bibitem{ivic} A.~{Ivi\'c} .
\newblock {\em The theory of Hardy's $Z$-function.}
\newblock{ Cambridge~: Cambridge University Press}, 2013.

\bibitem{iwakow} H.~{Iwaniec} and E.~{Kowalski} .
\newblock {\em Analytic Number Theory.}
\newblock{ Providence, RI: American Mathematical Society (AMS)}, 2004.

\bibitem{john} F.~{John}.
\newblock {Extremum problems with inequalities as subsidiary conditions,}
\newblock in {\em Studies and essays presented to R. Courant on his 60th birthday}, Interscience Publishers, New York, 1948, 187--204.

\bibitem{kruskal} C.P.~{Kruskal}.
\newblock {The orchard visibility problem and some variants.}
\newblock{\em J.~Comput.~Syst.~Sci.}, 74(4):587--597, 2008.

\bibitem{markstrombis} J.~{Marklof} and A.~{Str\"ombergsson} .
\newblock {The distribution of free path lengths in the periodic Lorentz gas and related lattice point problems.}
\newblock{\em Ann.~Math.~(2)}, 172(3):1949--2033, 2010.

\bibitem{marstro} J.~{Marklof} and A.~{Str\"ombergsson} .
\newblock {Visibility and Directions in Quasicrystals.}
\newblock{Int.~Math.~Res.~Not.}, 2015(15):6588-6617, 2015.

%\bibitem{montgom} H.L.~{Montgomery}.
%\newblock {Harmonic analysis as found in analytic number theory,}
%\newblock in {\em Twentieth century harmonic analysis---a celebration. Proceedings of the NATO Advanced Study Institute, Il Ciocco, Italy, July 2--15, 2000}, { Dordrecht: Kluwer Academic Publishers}, p.~271--293, 2001.

\bibitem{orourke} J.~{O'Rourke}.
\newblock {\em Art gallery theorems and algorithms.}
\newblock {\em The International Series of Monographs on Computer Science, 3.} New York - Oxford: Oxford University Press, 1987.

\bibitem{polyaforestbis} G.~{P\'olya}.
\newblock {\em Problems and Theorems in Analysis, vol.~2.}
\newblock{ Springer--Verlag, New York }, 1976.

\bibitem{polyszego} G.~{P\'olya} and G.~{Szeg\"o}.
\newblock {Zahlentheoretisches und Wahrscheinlichkeitstheoretisches \"uber die Sichtweite im Walde.}
\newblock{\em Arch.~der Math.~u.~Phys. (3) }, 27:135--142, 1918.

\bibitem{solweiss} Y.~{Solomon} and B.~{Weiss}.
\newblock {Dense forests and Danzer sets.}
\newblock{To appear.}



\end{thebibliography}
\end{document}